\numberwithin{equation}{section}
\theoremstyle{plain}
\newtheorem{prop}{Proposition}
\newtheorem{coro}[prop]{Corollary}
\theoremstyle{definition}
\newtheorem{ques}[prop]{Question}
\newtheorem{rema}[prop]{Remark}
\def\ra{\rightarrow}
\def\cM{{\mathcal M}}
\def\cO{{\mathcal O}}
\def\cP{{\mathcal P}}
\def\cQ{{\mathcal Q}}
\def\cR{{\mathcal R}}
\def\cU{{\mathcal U}}
\def\cV{{\mathcal V}}
\def\fS{{\mathfrak S}}
\def\bA{{\mathbb A}}
\def\bG{{\mathbb G}}
\def\bP{{\mathbb P}}
\def\bQ{{\mathbb Q}}
\def\bZ{{\mathbb Z}}
\def\rH{{\mathrm H}}
\def\fS{{\mathfrak S}}
\def\ft{{\mathfrak t}}
\def\GL{\mathrm{GL}}
\def\Bl{\mathrm{Bl}}
\def\Pic{\mathrm{Pic}}
\def\End{\mathrm{End}}
\def\Gal{\mathrm{Gal}}
\def\Aut{\mathrm{Aut}}
\def\Aff{\mathrm{Aff}}
\def\Gr{\mathrm{Gr}}
\def\NS{\mathrm{NS}}
\def\Ext{\mathrm{Ext}}
\def\Hom{\mathrm{Hom}}
\def\lim{\mathrm{lim}}
\def\Cox{\mathrm{Cox}}
\def\Spec{\mathrm{Spec}}
\def\rank{\mathrm{rk}}
\def\PGL{\mathrm{PGL}}
\def\fA{{\mathfrak A}}
\author{Brendan Hassett}
\address{Department of Mathematics\\
Brown University \\
Box 1917 
151 Thayer Street
Providence, RI 02912 \\
USA}
\email{brendan\underline{ }hassett@brown.edu}
\author{Yuri Tschinkel}
\address{Courant Institute\\
                New York University \\
                New York, NY 10012 \\
                USA }
\email{tschinkel@cims.nyu.edu}
\address{Simons Foundation\\
160 Fifth Avenue\\
New York, NY 10010\\
USA}
\title{Torsors and stable equivariant birational geometry}
\begin{document}
\date{\today}

\begin{abstract}
We develop the formalism of universal torsors in equivariant birational geometry and apply it to produce new examples of nonbirational but stably birational actions of finite groups.  

\end{abstract}

\maketitle

\section{Introduction}

Let $k$ be an algebraically closed field of characteristic zero.
Consider a finite group $G$, acting regularly on a smooth projective variety $X$ over $k$, generically freely from the right.  
Given two such varieties $X$ and $X'$ with $G$-actions, we say that $X$ and $X'$ are 
{\em $G$-birational}, and write
$$
X\sim_G X',
$$
if there is a $G$-equivariant
birational map
$$
X\stackrel{\sim}{\dashrightarrow} X'.
$$
We say that $X$ and $X'$ are 
{\em stably $G$-birational} if there is a $G$-equivariant
birational map
$$
X \times \bP^n \stackrel{\sim}{\dashrightarrow} X' \times \bP^{n'},
$$
where the action of $G$ on the projective spaces is trivial.  The {\em No-Name Lemma} implies that this is equivalent to the existence of 
$G$-equivariant vector bundles $E\rightarrow X$ and $E'\rightarrow X'$ that are $G$-birational to each other. In particular, faithful linear
actions on $\bA^n$ are always stably $G$-birational but not always $G$-birational \cite{RYinvariant}, \cite{KT-vector}.   We say that the $G$-action
on an $n$-dimensional variety $X$ is (stably) {\em linearizable} if there exists an $(n+1)$-dimensional faithful representation $V$ of $G$ such that $X$ is (stably) $G$-birational to $\bP(V)$.

There are  a number of tools to distinguish $G$-birational actions, including
\begin{itemize}
\item{existence of fixed points upon restriction to abelian subgroups of $G$ \cite{RYessential};}
\item{determinant of the action of abelian subgroups in the tangent bundle at fixed points \cite{RYinvariant};}
\item{Amitsur group and $G$-linearizability of line bundles \cite[Section 6]{blancfinite};} 
\item{group cohomology for induced actions on invariants such as the N\'eron-Severi group \cite{BogPro};}
\item{equivariant birational rigidity, see, e.g., \cite{CS};}
 \item{equivariant enhancements of intermediate Jacobians and cycle invariants \cite{HT-intersect};}
\item{equivariant Burnside groups \cite{BnG}, \cite{KT-vector}.}
\end{itemize}
Of these, only the fixed point condition for abelian subgroups, the Amitsur group, and group cohomology -- specifically $\rH^1(G, \Pic(X))$ or higher unramified cohomology --  yield {\em stable} $G$-birational invariants.

Nevertheless, nontrivial stable birational equivalences are hard to come by. In this paper, we adopt the formalism of universal torsors -- developed
by Colliot-Th\'el\`ene, Sansuc, Skorobogatov, and others, in the context of arithmetic questions like Hasse principle and weak approximation
-- to the framework of equivariant birational geometry. As an application, we exhibit new examples of nonbirational but stably birational actions. Specifically, we
\begin{itemize}

\item show that the linear $\fS_4$-action on $\bP^2$ and an $\fS_4$-action on a del Pezzo surface 
of degree 6 are not birational but stably birational (Proposition~\ref{prop:dp6-s4}),
\item settle the stable linearizability problem for quadric surfaces (Proposition~\ref{prop:quads}),
\item show that the linear $\fA_5$-action on $\bP^2$ 
and the natural $\fA_5$-action on a del Pezzo surface of degree 5 are not birational but stably birational (Proposition~\ref{prop:a5}),
\item show that $\fA_5$-actions on the Segre cubic threefold, arising from two nonconjugate embeddings of $\fA_5\hookrightarrow \fS_6$, are not birational but stably birational (Proposition~\ref{prop:segre}).
\end{itemize}

\

Here is the roadmap of the paper: In Sections~\ref{sect:tori} and \ref{sect:equi-for} we extend the formalism of universal torsors and Cox rings to the context of equivariant geometry over $k$. In Section~\ref{sect:torvar}, we study the (stable) linearization problem for toric varieties. A key example, del Pezzo surfaces of degree six, is discussed in
Section~\ref{sect:dp6}; the related case of Weyl group actions for $\mathsf{G}_2$ is presented in 
Section~\ref{sect:wg2}. In Section~\ref{sect:quad} we turn to quadric surfaces.
In Section~\ref{sect:quotor} we discuss linearization of actions of Weyl groups on Grassmannians and their quotients by tori.

\

\noindent
{\bf Acknowledgments:} 
The first author was partially supported by Simons Foundation Award 546235 and NSF grant 1701659,
the second author by NSF grant 
2000099. 

\section{Algebraic tori and torsors over nonclosed fields}
\label{sect:tori}

Let $k$ be a field of characteristic zero and $X$ an $d$-dimensional geometrically rational variety over $k$. Recall that $X$ is called (stably) $k$-rational if $X$ is (stably) birational to $\bP^d$ over $k$. 

An important class of varieties which was studied from the perspective of (stable) $k$-rationality is that of algebraic tori. A classification of (stably) $k$-rational tori in dimensions $d\le 5$ can be found in \cite{voskresenskii2dim}, \cite{kun}, \cite{hoshi}.    

In this section, we review the main features of the theory of tori and torsors under tori over nonclosed fields. Our main references are \cite{Sansuc-CT} and \cite{CTSansucDuke}. 

\subsection{Characters and Galois actions}
\label{sect:char}

Recall that an algebraic torus $T$ over $k$ is an algebraic group over $k$ such that 
$$
\bar{T}:=T_{\bar{k}} = \mathbb G_m^d, 
$$
over an algebraic closure $\bar{k}$ of $k$. 
Let $M$ 
be its character lattice and $N$ the lattice of cocharacters,
which carry actions of the absolute Galois group $\Gal(k)$ of $k$.

The descent data for a torus $T$ over an arbitrary field $k$ of characteristic zero is encoded by the
continuous representation
$$
\Gal(k) \rightarrow \GL(M).
$$

\subsection{Quasi-trivial tori}
\label{sect:quasi}

There is a tight connection between (stable) $k$-rationality of  $T$ and properties of the Galois module $M$.

Recall  that $M$ is called a {\em permutation} module if $M$ has a $\bZ$-basis permuted by $\Gal(k)$, i.e., $M$ is a direct sum of modules of the form $\bZ[\Gal(k)/H]$, where $H$ is a closed finite-index subgroup. By definition, a torus $T$ is {\em quasi-trivial} if $M$ is a permutation module. 
Quasi-trivial tori are rational over $k$ by Hilbert's Theorem 90 for general linear groups.

Every torus may be expressed as a subtorus or quotient of a quasi-trivial torus, by expressing  the
character or cocharacter lattices as quotients of permutation modules.  

\subsection{Rationality criteria} 
\label{sect:ratcrit}

A fundamental theorem \cite{vosk} is that a torus $T$ is stably rational if and only if $M$ 
is {\em stably permutation}, i.e., there exist permutation modules $P$ and $Q$ such that
$$
M\oplus P \simeq Q.
$$
This condition implies the vanishing of
$$
\rH^1(H, M)
$$
for all closed finite-index subgroups $H\subseteq \Gal(k)$ (i.e., $M$ is {\em coflabby}).

\subsection{Torsor formalism}
\label{sect:torsors}

Let $X$ be a smooth projective geometrically rational variety over $k$. 
Since $\bar{X}$ is rational, $\Pic(\bar{X})\ra \NS(\bar{X})$ is an isomorphism.
Let 
$$
T_{\NS(\bar{X})}
$$ 
denote the N\'eron-Severi torus of $X$, i.e., a torus whose
character group 
is isomorphic, as a Galois module, to $\NS(\bar{X})$. 
Let 
$$
\cP \rightarrow X
$$ 
be a {\em universal torsor} for $T_{\NS(\bar{X})}$ over $k$; below we will discuss when it exists over the ground field.  
Recall that 
$
\cP \rightarrow X
$ 
is a morphism defined over $k$, admitting a free action
$$\cP \times T_{\NS(\bar{X})}\rightarrow \cP$$
over $X$ with the following geometric property: 
Choose a basis 
$$
\lambda_1,\ldots,\lambda_r \in \NS(\bar{X})=\Hom(T_{\NS(\bar{X})},\bG_m),
$$
so that the associated rank-one bundles
$L_1,\ldots,L_r \ra X$ satisfy $$
\lambda_i=[L_i], \quad i=1,\ldots,r.
$$
This determines $\cP$ uniquely over an algebraic closure $\bar{k}/k$; however for each $\gamma \in \rH^1(\Gal(k),T_{\NS(\bar{X})})$, we can twist the 
torus action to obtain another such torsor ${}^{\gamma}\cP$.  

Given a homomorphism of free Galois modules 
$$
\alpha: M \rightarrow \NS(\bar{X})
$$ 
there is a homomorphism of tori 
$T_{\NS(\bar{X})} \rightarrow T_M$ and an induced torsor $\cP_{\alpha} \rightarrow X$ for $T_M$.  

A sufficient condition for the existence of a universal torsor over $k$ is the existence of a 
$k$-rational point $x\in X(k)$: one can define $\cP \rightarrow X$ over $k$ via 
evaluation at $x$. More generally,
suppose that $D_1,\ldots,D_r$ is a collection of effective divisors on $\bar{X}$ that is Galois-invariant and
generates $\NS(\bar{X})$.  Let $U$ denote their complement in $X$; we have an exact sequence
$$0 \rightarrow R=\bar{k}[U]^\times/\bar{k}^\times \rightarrow \oplus_{j=1}^r \bZ D_j \rightarrow \NS(\bar{X}) \rightarrow 0.$$
The following conditions are equivalent \cite[Prop.~2.2.8]{CTSansucDuke}:
\begin{itemize}
\item{the short exact sequence
\begin{equation}
1 \rightarrow \bar{k}^\times \rightarrow \bar{k}[U]^\times \rightarrow \bar{k}[U]^\times/\bar{k}^\times \rightarrow 1 \label{U-SES} \end{equation}
splits;}
\item{the descent obstruction for $\bar{\cP}$ in $\rH^2(\Gal(k),T_{\NS(\bar{X})})$ vanishes.}
\end{itemize}
Indeed, each rational point $x\in U(k)$ gives a splitting of (\ref{U-SES}).

When can the universal torsor -- or more general torsor constructions -- be used to obtain stable rationality results for $X$ over $k$? 
\begin{prop} \label{prop:stabratcrit}
A smooth projective geometrically rational variety 
$X$ over $k$ is stably rational over $k$ under the following conditions: 
\begin{itemize}
\item{its universal torsor $\cP \ra X$ is rational over $k$;}
\item{its N\'eron-Severi torus $T_{\NS(\bar{X})}$ is stably rational;}
\item{the morphism $\cP \ra X$ admits a rational section, i.e., the torsor splits.}
\end{itemize}
\end{prop}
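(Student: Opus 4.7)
The plan is to chain together the three hypotheses via the observation that a torsor with a rational section becomes trivial birationally, which converts information about the total space $\cP$ into information about $X\times T_{\NS(\bar X)}$.

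First I would use the third hypothesis. Since $\cP\to X$ is a $T$-torsor (for $T=T_{\NS(\bar X)}$) that admits a rational section $\sigma\colon X\dashrightarrow \cP$ defined over $k$, the map
\[
X\times T \dashrightarrow \cP, \qquad (x,t)\mapsto \sigma(x)\cdot t,
\]
is a $T$-equivariant birational map over $k$. Thus
\[
\cP \sim_{\mathrm{bir}/k} X\times T.
\]

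Next I would invoke the first hypothesis: $\cP$ is $k$-rational, so $\cP$ is $k$-birational to $\bP^N$ for $N=\dim\cP=\dim X+\dim T$. Combining with the previous step gives
\[
X\times T \sim_{\mathrm{bir}/k} \bP^{N}.
\]

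Finally I would apply the second hypothesis. Stable $k$-rationality of $T$ yields permutation modules (or directly: $k$-rational varieties) realizing a birational equivalence $T\times \bP^{a} \sim_{\mathrm{bir}/k} \bP^{b}$ for some $a,b$. Multiplying the relation $X\times T\sim \bP^{N}$ by $\bP^{a}$ and using rationality of $\bP^{N}\times\bP^{a}$ gives
\[
X\times \bP^{b}\;\sim\;X\times T\times \bP^{a}\;\sim\;\bP^{N}\times\bP^{a}\;\sim\;\bP^{N+a},
\]
all over $k$. Hence $X$ is stably $k$-rational.

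The argument is essentially formal once the first step is in place; the only real content is the observation that a torsor under a linear algebraic group is birationally trivial as soon as it has a rational section, which is immediate from the translation map above. No step presents a genuine obstacle, so the proof is short — the proposition is really a bookkeeping statement about how the three torsor-theoretic inputs combine.
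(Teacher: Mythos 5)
Your proof is correct, and it is exactly the standard argument this proposition rests on: the paper itself states it without proof (Section~\ref{sect:torsors} is a review of the Colliot-Th\'el\`ene--Sansuc descent formalism), and the intended justification is precisely your chain --- a rational section trivializes the torsor birationally, so $\cP \sim_{\mathrm{bir}/k} X \times T_{\NS(\bar{X})}$, after which rationality of $\cP$ and stable rationality of the torus combine formally. No gaps; the only point worth stating carefully, as you do, is that the section and the torsor action are defined over $k$, so the trivializing map $(x,t)\mapsto \sigma(x)\cdot t$ is a $k$-birational equivalence.
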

The last two conditions hold \cite[Prop.~3]{BCTSSD} if $\NS(\bar{X})$ is stably permutation.  
Note that there are examples where the relevant cohomology vanishes ($\NS(\bar{X})$ is flabby and coflabby)
but $\NS(\bar{X})$ fails to be a stable permutation module; these can be found in \cite[Remarque R4]{Sansuc-CT} (see also  \cite[Section 1]{hoshi}).

\section{Equivariant formalism}
\label{sect:equi-for}

We turn to the equivariant context, working over an 
algebraically closed field $k$ of characteristic zero. Our goal is to formulate a $G$-equivariant version of the torsor formalism in \cite{CTSansucDuke}, which will be our main tool in the study of the (stable) linearization problem.

\subsection{$G$-tori}
\label{sect:g-tori}

Let $T=\mathbb G_m^d$ be an algebraic torus over $k$.  Recall that we have a split exact sequence 
\begin{equation}
\label{eqn:aut-tor}
1\to T(k)\to \Aff(T)\to \Aut(T)\to 1,
\end{equation}
where $\Aut(T)$ is the automorphisms of $T$ as an algebraic group and $\Aff(T)$ is the associated affine group.
Note that $\Aut(T)$ acts faithfully on the character lattice of $T$.

Let $G\subset \Aut(T)$ be a finite group, so that $T$ is a group in the category of $G$-varieties. 
We refer to such tori as {\em $G$-tori}. Given $G\subset \Aff(T)$, the elements in $G\cap T(k)$ will be called {\em translations}. This gives rise to a torsor
$$P\times T \rightarrow P,$$
where $T$ is the $G$-torus associated with the composition $G\rightarrow \Aff(T) \rightarrow \Aut(T)$.

The (stable) linearization problem for $G$-tori concerns (stable) birationality of the $G$-action on $T$ and a linear $G$-action on $\bP^d$. There are two extreme cases:
\begin{itemize}
\item $G\subset T(k)$, i.e., $G$ is abelian and the $G$-action is a translation action,
\item $G\cap T(k)=1$.
\end{itemize}

\subsection{Linearizing translation actions}

An action of $G\subset T(k)$ extends to a linear action; indeed it extends to a
linear action on the natural compactification $T\hookrightarrow \bP^d$. Note that these do not have to be equivariantly birational to each other, for different embeddings $G\hookrightarrow T(k)$; the determinant condition of  \cite{RYinvariant} characterizes such actions up to equivariant birationality. By the No-Name Lemma, translation actions are stably equivariantly birational. For {\em nonabelian} $G$ containing an abelian subgroup of rank $d$, 
similar examples of nonbirational but stably birational $G$-actions on tori can be extracted from 
\cite[Prop. 7.2]{RYinvariant}.

\subsection{Linearizing translation-free actions}
\label{sect:free}

The (stable) linearization problem for actions without translations is essentially equivalent to the well-studied (stable) rationality problem of tori over nonclosed fields. It is
controlled by the $G$-action on the cocharacters. 
We record:

\begin{prop}
\label{prop:toric:mor}
Let $T$ be a $G$-torus (i.e., $G\cap T(k)=1$) with cocharacter module $N$. Assume that
$N$ is a stably permutation $G$-module. Then the $G$-action on $T$ is stably linearizable. 
\end{prop}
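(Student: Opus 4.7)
The overall strategy is to use the stably permutation hypothesis to embed the $G$-torus $T$, up to a quasi-trivial factor, into a linear $G$-representation, and then to eliminate the auxiliary factor using the No-Name Lemma.

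By hypothesis there exist permutation $G$-modules $P$ and $Q$ with an isomorphism $N \oplus P \cong Q$. Since a $G$-torus is determined up to equivariant isomorphism by its cocharacter lattice as a $G$-module, this yields a $G$-equivariant isomorphism of tori $T \times T_P \cong T_Q$, where $T_P$ and $T_Q$ are the quasi-trivial $G$-tori with cocharacter lattices $P$ and $Q$.

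Next, I would observe that every quasi-trivial $G$-torus sits $G$-equivariantly as a dense open subvariety of a linear representation: writing $R = \bigoplus_i \bZ[G/H_i]$, the torus $T_R$ is the open complement of the coordinate hyperplanes inside the permutation representation $V_R := \bigoplus_i k[G/H_i]$, on which $G$ acts linearly by permuting basis vectors. Combining these inclusions with the isomorphism above, I get
\begin{equation*}
T \times V_P \;\sim_G\; T \times T_P \;\cong\; T_Q \;\sim_G\; V_Q.
\end{equation*}

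Since $G \subset \Aut(T)$ acts faithfully, hence generically freely, on $T$, the No-Name Lemma applies to the $G$-equivariant trivial vector bundle $T \times V_P \to T$ and gives $T \times V_P \sim_G T \times \bA^{\dim V_P}$ with trivial $G$-action on the $\bA^{\dim V_P}$ factor. Passing to projective closure in this trivial factor, and using that any linear $G$-representation $V_Q$ is $G$-equivariantly birational to $\bP(V_Q \oplus \mathbf{1})$ for a trivial line $\mathbf{1}$, I obtain
\begin{equation*}
T \times \bP^{\dim V_P} \;\sim_G\; \bP(V_Q \oplus \mathbf{1}),
\end{equation*}
with trivial $G$-action on the $\bP^{\dim V_P}$ factor. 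The representation $V_Q \oplus \mathbf{1}$ is faithful, because $G$ acts faithfully on $N \subset Q$ and hence on $V_Q$, so this exhibits $T$ as stably linearizable.

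The one step that requires care is the application of the No-Name Lemma: one must verify that the diagonal $G$-action on $T \times V_P$ is generically free (which reduces to the faithfulness of the action on $T$), and that composing the open inclusions $T_P \subset V_P$, $T_Q \subset V_Q$ with the torus isomorphism indeed yields a stable $G$-birational equivalence of the stated form. No genuine obstacle is anticipated; the argument is essentially the equivariant reflection of the classical stable rationality criterion for quasi-trivial tori recalled in Section~\ref{sect:ratcrit}.
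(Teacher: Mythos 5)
Your proof is correct and takes essentially the same route as the paper's: decompose $N \oplus P \simeq Q$, realize the quasi-trivial tori $T_P$ and $T_Q$ as dense open $G$-stable subsets of permutation representations, and then apply the No-Name Lemma to replace the linear action on the auxiliary affine factor by a trivial one. The only difference is cosmetic -- you make explicit the generic-freeness and faithfulness verifications that the paper leaves implicit.
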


\begin{proof}
Suppose first that $N$ is a permutation module. We can realize our torus
$$T \subset \bA^d, \quad d=\dim(T),$$
as an open subset of affine space twisted by a permutation of the basis vectors.
Any linear twist of affine space is isomorphic to affine space by Hilbert's Theorem 90,
hence the $G$-action on $T$ is linearizable as well.

If $N$ is stably permutation then there exist permutation modules $P$ and $Q$ such that
$$N \oplus P \simeq Q.$$
The argument above yields
$$
T \times \bA^{\dim(P)} \stackrel{\sim}{\dashrightarrow} \bA^{\dim(Q)}
$$
which, combined with the No-Name Lemma, gives that the action is stably linear.  
\end{proof}

\begin{ques}
Can we effectively compute whether a $G$-module is stably permutation? 
\end{ques}


\subsection{$G$-equivariant torsors}
\label{sect:g-torsors}

We now turn to general smooth projective varieties $X$ with a generically free regular action of a finite group $G$. We assume that 
$$
\NS(X)=\Pic(X)
$$ 
is a free abelian group; it inherits the $G$-action.  Let 
$$
T_{\NS(X)}:=\Hom(\NS(X),\bG_m)
$$
denote the N\'eron-Severi torus, it is a $G$-torus. 

Let $T$ be a $G$-torus with character module $\hat{T}$. A $G$-equivariant $T$-torsor over $X$ consists of a $G$-equivariant
scheme $\cP \ra X$ and a $G$-equivariant action 
$$
\cP\times T \rightarrow \cP
$$ 
over $X$ that is a torsor on the underlying groups and varieties. 
Let 
$$
\rH^1_G(X,T)
$$ 
denote the group of isomorphism classes of $G$-equivariant 
$S$-torsors 
over $X$.  We have an exact sequence
\begin{equation} \label{LES}
0 \rightarrow \rH^1(G,T) \rightarrow \rH^1_G(X,T) \rightarrow \Hom_G(\hat{T},\Pic(X)) 
\stackrel{\partial}{\rightarrow} \rH^2(G,T).
\end{equation}
The middle arrow may be understood as recording the line bundles arising from characters of $T$.

\subsection{Amitsur group}
\label{sect:amitsur}
Restricting to $G$-invariant divisors 
$$
\Pic(X)^G \subset \Pic(X),
$$
we obtain
$$0 \rightarrow \Hom(G,\bG_m) \rightarrow \Pic_G(X) \rightarrow \Pic(X)^G \rightarrow \rH^2(G,\bG_m)$$
where $\Pic_G(X)$ is the group of $G$-linearized line bundles on $X$.
The class 
$$
\alpha=\partial([h]),
$$
where $h$ is $G$-invariant,
is called the {\em Schur multiplier}. It vanishes if and only if the 
$G$-action lifts to $\Gamma(X,\cO_X(mh))$ for
each $m>0$. The subgroup 
$$
\mathrm{Am}(X,G)\subseteq \rH^2(G,\bG_m)
$$ 
generated by all such classes is called the {\em Amitsur group} \cite[\S 6]{blancfinite};
it is a stable $G$-birational invariant \cite[Thm.~2.14]{sarikyan}.
Note that when $\mathrm{Am}(X,G)=0$ there may be subgroups $H \subsetneq G$ with
$\mathrm{Am}(X,H) \neq 0$.

\subsection{Lifting the $G$-action}

Suppose that 
$$
\cP\rightarrow X
$$ 
is a {\em universal torsor}, i.e., a torsor for $T=T_{\NS(X)}$ whose class
in $\Hom(\hat{T},\Pic(X))$ is the identity. When does the $G$-action on $X$ lift to $\cP$? This problem is analogous to the problem of descending 
the universal torsor to the ground field, in the arithmetic context of Section~\ref{sect:torsors}. 

Here are two sufficient conditions:
\begin{itemize}
\item{$X$ admits a $G$-fixed point;}
\item{the cocycle 
$$
\alpha=\partial (\operatorname{Id}) \in \rH^2(G,T_{\NS(X)})
$$ 
vanishes
(whence all Schur multipliers are trivial).}
\end{itemize}
The latter is necessary by the long exact sequence (\ref{LES}).
The following proposition gives a criterion for the vanishing of this cocycle:
\begin{prop} \label{prop:openinduce}
Let $X$ be a smooth projective $G$-variety. Assume that  $\Pic(X)$ is a free abelian group.
Fix a $G$-invariant open subset $\emptyset \neq U \subset X$ with $\Pic(U)=0$.
The class $\alpha \in \rH^2(G,T_{\NS(X)})$ vanishes if and only if the exact sequence
\begin{equation} \label{eqn:splitU}
1 \rightarrow k^\times \rightarrow k[U]^\times \rightarrow k[U]^\times/k^\times \rightarrow 1
\end{equation}
has a $G$-equivariant splitting. 
\end{prop}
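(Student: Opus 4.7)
The plan is to recognize $\alpha$ as the Yoneda $2$-extension class of a canonical four-term exact sequence, and then exploit its splicing through $R := k[U]^\times/k^\times$. Let $D_1,\ldots,D_r$ be the irreducible components of $X \setminus U$; since $\Pic(U)=0$ they generate $\Pic(X)$, and there is a canonical $G$-equivariant exact sequence
\begin{equation*}
0 \to k^\times \to k[U]^\times \xrightarrow{\mathrm{div}} \bigoplus_{j=1}^{r} \bZ D_j \to \Pic(X) \to 0,
\end{equation*}
which factors as the Yoneda splicing of
$(A)\colon 0 \to k^\times \to k[U]^\times \to R \to 0$
and
$(B)\colon 0 \to R \to \bigoplus_j \bZ D_j \to \Pic(X) \to 0$,
with classes $[A] \in \Ext^1_G(R,k^\times)$ and $[B] \in \Ext^1_G(\Pic(X),R)$.

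The first substantive step is to identify $\alpha$ with the Yoneda product $[B] \cup [A] \in \Ext^2_G(\Pic(X),k^\times) \cong \rH^2(G,T_{\NS(X)})$, where the identification uses that $\Pic(X)$ is $\bZ$-free. I would establish this by unpacking $\alpha = \partial(\mathrm{Id})$ from (\ref{LES}) explicitly: choose a (non-$G$-equivariant) splitting $\sigma\colon \Pic(X) \to \bigoplus_j \bZ D_j$, represent each $\sigma(\lambda)$ by a distinguished section $s_\lambda$ of the associated line bundle, and then compare $g^{\ast}s_\lambda$ with $s_{g^{\ast}\lambda}$ over $U$. The resulting $2$-cocycle naturally decomposes into its divisor-theoretic part (living in $R$ via $[B]$) and its scalar part (the extension $[A]$), giving precisely the Yoneda splice. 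Granting this, the forward direction is immediate: a $G$-equivariant splitting of (\ref{eqn:splitU}) forces $[A]=0$, hence $\alpha = [B]\cup[A] = 0$.

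For the converse I would argue geometrically using the universal torsor. By (\ref{LES}), the vanishing of $\alpha$ is equivalent to the $G$-action on $X$ lifting to a $G$-equivariant structure on $\cP \to X$; since $\Pic(U) = 0$, the restriction $\cP|_U$ is a trivial $T_{\NS(X)}$-torsor. The set of $G$-equivariant structures on $\cP$ is a torsor under $\rH^1(G,T_{\NS(X)})$, and I expect this freedom to suffice to choose the lift so that $\cP|_U$ becomes trivially $G$-equivariant, equivalently, so that the trivialization is $G$-equivariant. Such a trivialization is exactly a $G$-equivariant multiplicative lift of $\Pic(X) \to R$ through $k[U]^\times$, from which the sought splitting of (\ref{eqn:splitU}) is read off. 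The main obstacle will be making this last adjustment precise: a diagram chase using the long exact sequence obtained by applying $\Hom(\Pic(X),-)$ to $(A)$ must show that the obstruction to a $G$-equivariant trivialization of $\cP|_U$ coincides, modulo the $\rH^1(G,T_{\NS(X)})$-ambiguity in the lift, exactly with $\alpha$; once that match is achieved, the splitting follows.
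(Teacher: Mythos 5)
Your framework is the right one, and it is the mechanism behind the proof the paper cites from Colliot-Th\'el\`ene--Sansuc: realize $\alpha$, via the isomorphism $\rH^2(G,T_{\NS(X)})\cong\Ext^2_G(\Pic(X),k^\times)$ (using freeness of $\Pic(X)$), as the class of the $2$-extension
$$
1 \to k^\times \to k[U]^\times \xrightarrow{\ \mathrm{div}\ } \oplus_{j}\bZ D_j \to \Pic(X) \to 0,
$$
i.e.\ as the Yoneda product of $[A]$ and $[B]$. Granting that identification (your cocycle sketch is the standard way to verify it), your forward implication is correct: an equivariant splitting gives $[A]=0$, hence $\alpha=0$.

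The converse is where there is a genuine gap, exactly at the step you flag as ``the main obstacle,'' and it is not a bookkeeping issue. In the Galois setting the converse is immediate from the long exact sequence obtained by applying $\Hom_{\Gal(k)}(-,\bar k^\times)$ to $0\to R\to \oplus_j\bZ D_j\to \Pic(\bar X)\to 0$: the kernel of $\delta\colon \Ext^1(R,\bar k^\times)\to \Ext^2(\Pic(\bar X),\bar k^\times)$ is the image of $\Ext^1(\oplus_j\bZ D_j,\bar k^\times)$, and that group \emph{vanishes} by Shapiro's lemma plus Hilbert's Theorem 90. This vanishing is the entire content of the converse, and it has no equivariant analogue: here $G$ acts trivially on $k^\times$, so $\Ext^1_G(\oplus_j\bZ D_j,k^\times)\cong \oplus_i \Hom(H_i,k^\times)$, where the $H_i$ are the stabilizers of the components $D_i$, and this is usually nonzero. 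Thus $\alpha=0$ only says that $[A]$ lies in the image of $\oplus_i\Hom(H_i,k^\times)$, not that $[A]=0$. Your proposed substitute---re-choosing the equivariant structure on $\cP$ within its $\rH^1(G,T_{\NS(X)}(k))$-orbit---cannot repair this: equivariant structures on the non-equivariantly trivial torsor $\cP|_U$ are classified by $\rH^1\bigl(G,T_{\NS(X)}(k[U])\bigr)$, and twisting only moves the class of $\cP|_U$ within a coset of the image of $\rH^1(G,T_{\NS(X)}(k))$, which need not contain $0$.

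In fact no argument can close the gap, because the ``only if'' direction fails for an arbitrary $U$. Take the paper's own example from Section \ref{sect:p1}: $G=\fS_3$ acting on $\bP^1$ by $\sigma[x:y]=[\omega x:y]$, $\iota[x:y]=[y:x]$, and $U=\bP^1\setminus\{0,\infty\}$, which is $G$-invariant with $\Pic(U)=0$. Here $\alpha=0$: the action lifts to $\GL_2$, so the universal torsor $\bA^2\setminus\{0\}$ carries an equivariant structure (alternatively, $\rH^2(\fS_3,k^\times)=0$ since $\fS_3$ has trivial Schur multiplier). Yet (\ref{eqn:splitU}) admits no equivariant splitting: $R\cong\bZ$ is generated by the class of $t=x/y$, which is fixed by $\sigma$, so a splitting must send it to some $ct\in k[U]^\times$ fixed by $\sigma$, whereas $\sigma$ multiplies $t$ by a primitive cube root of unity. (For $U$ the complement of the $\fS_3$-orbit of $[1:1]$ a splitting \emph{does} exist.) So the equivalence can hold at best for suitably chosen $U$, and any correct treatment---including the paper's one-line proof-by-analogy, which silently imports Hilbert 90---must either constrain $U$ (e.g.\ control the character groups $\Hom(H_i,k^\times)$ of the boundary stabilizers) or weaken the conclusion to: $[A]$ lies in the image of $\Ext^1_G(\oplus_j\bZ D_j,k^\times)\to\Ext^1_G(R,k^\times)$.
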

The proof is completely analogous to the proof of \cite[2.2.8(v)]{CTSansucDuke} with group
cohomology replacing Galois cohomology.

\subsection{Constructing the torsor}

This approach can yield a construction for the universal torsor. Let $D_1,\ldots,D_r$ be a 
$G$-invariant collection of effective divisors generating
$\Pic(X)$.  The complement 
$$
U=X\setminus (D_1\cup\ldots \cup D_r)
$$
has trivial Picard group. Consider the exact
sequence
$$ 0 \rightarrow \hat{R} \rightarrow \oplus_{i=1}^r \bZ D_i \rightarrow \Pic(X) \rightarrow 0,
$$
where $\hat{R}$ is the module of relations among the $D_i$,
and its dual
\begin{equation} \label{torusSES}
0 \rightarrow T_{\NS(X)} \rightarrow M \rightarrow R \rightarrow 0.
\end{equation}
There is a canonical $G$-homomorphism
$$
\hat{R} \rightarrow k[U]^\times/k^\times
$$
obtained by regarding the relations as rational functions that are invertible on $U$. 
The existence of a splitting for (\ref{eqn:splitU}) yields a lift
$$
\hat{R} \rightarrow k[U]^\times,
$$
whence a morphism
$$U \rightarrow R.$$
The sequence (\ref{torusSES}) induces a $T_{\NS(X)}$-torsor over $U$, which 
extends to all of $X$ as in ~\cite[Thm.~2.3.1]{CTSansucDuke}.

\subsection{Properties of torsors}

We also have the equivariant version of \cite[Prop.~3]{BCTSSD}, an application of Hilbert Theorem 90:
\begin{prop} \label{prop:split}
Suppose $\NS(X)$ is stably permutation as a $G$-module. If $\cP\ra X$ is a universal torsor then
there exists a $G$-equivariant rational section $X\dashrightarrow \cP$, whence 
$$
\cP \sim_G T_{\NS(X)}\times X.
$$
\end{prop}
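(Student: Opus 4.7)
The plan is to reduce to the case of a torsor under a quasi-trivial $G$-torus and then invoke an equivariant form of Hilbert~90. The stable permutation hypothesis gives $G$-permutation modules $P$ and $Q$ with $\NS(X)\oplus P \simeq Q$; dualizing yields a $G$-equivariant splitting $T_Q \simeq T_{\NS(X)} \times T_P$, with $T_P$ and $T_Q$ quasi-trivial. Pushing $\cP$ forward along the inclusion $T_{\NS(X)} \hookrightarrow T_Q$ produces a $G$-equivariant $T_Q$-torsor
$$
\cP' := \cP \times^{T_{\NS(X)}} T_Q \simeq \cP \times T_P
$$
over $X$. A $G$-equivariant rational section of $\cP'\to X$ projects via the $G$-equivariant map $\cP \times T_P \to \cP$ to a $G$-equivariant rational section $s:X\dashrightarrow \cP$ of $\cP\to X$; such a section induces a $G$-equivariant birational trivialization $(t,x)\mapsto t\cdot s(x)$, so $\cP \sim_G T_{\NS(X)}\times X$.

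The heart of the proof is then the claim that any $G$-equivariant torsor $\cQ\to X$ under a quasi-trivial $G$-torus $T$ admits a $G$-equivariant rational section. Restricting to the generic point, $\cQ|_{\Spec k(X)}$ is a $G$-equivariant torsor for the split torus $T$ over a field, so the underlying non-equivariant torsor is trivial (the Picard group of a field vanishes) and the equivariant structure is classified by $\rH^1(G,T(k(X)))$. Writing $\hat T = \bigoplus_i \bZ[G/H_i]$, one identifies
$$
T(k(X)) \simeq \bigoplus_i \Hom(\bZ[G/H_i], k(X)^\times) \simeq \bigoplus_i \operatorname{coInd}_{H_i}^G k(X)^\times
$$
as $G$-modules. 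Shapiro's lemma then gives $\rH^1(G,T(k(X))) \simeq \bigoplus_i \rH^1(H_i, k(X)^\times)$. Since $G$ acts generically freely on $X$, each $H_i$ does as well, and $k(X)/k(X)^{H_i}$ is a Galois extension with group $H_i$; the classical Hilbert~90 then forces each summand to vanish. Hence $\cQ$ is equivariantly trivial at the generic point of $X$, providing the desired rational section.

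The main point requiring care is the Shapiro identification and the verification that, at the generic point, $G$-equivariant $T$-torsors are classified by $\rH^1(G, T(k(X)))$ --- effectively the generic-point avatar of the long exact sequence~(\ref{LES}). Once these are in place, the argument parallels the arithmetic proof of \cite[Prop.~3]{BCTSSD}, with group cohomology and the $G$-action on the function field $k(X)$ playing the role of Galois cohomology over the base field.
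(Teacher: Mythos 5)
Your proof is correct and is, in substance, the proof the paper intends: the paper gives no argument beyond describing the proposition as ``the equivariant version of \cite[Prop.~3]{BCTSSD}, an application of Hilbert Theorem 90,'' and your write-up --- extracting the splitting $T_Q\simeq T_{\NS(X)}\times T_P$ from the stable-permutation hypothesis, pushing $\cP$ forward to a torsor under the quasi-trivial torus $T_Q$, and killing the generic-point obstruction in $\rH^1(G,T_Q(k(X)))$ via Shapiro's lemma and Hilbert 90 for the Galois extensions $k(X)/k(X)^{H_i}$ --- is exactly that argument with group cohomology replacing Galois cohomology. The generic-point classification you flag is the standard cocycle computation (the restriction of (\ref{LES}) to $\Spec k(X)$), so there is no gap; the only cosmetic point is that the paper writes the torsor action on the right, so the trivialization should read $(t,x)\mapsto s(x)\cdot t$.
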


\begin{coro}
\label{g-bir}
The existence of a $G$-equivariant universal torsor is a $G$-birational property.
\end{coro}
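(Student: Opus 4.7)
The plan is to reduce to Proposition~\ref{prop:openinduce}, which characterizes the existence of the $G$-equivariant universal torsor on a smooth projective $G$-variety $X$ with $\Pic(X)$ free in terms of a $G$-equivariant splitting of the sequence
$$
1 \to k^\times \to k[U]^\times \to k[U]^\times/k^\times \to 1
$$
for any nonempty $G$-invariant open $U \subseteq X$ with $\Pic(U) = 0$. Since this criterion depends only on the $G$-isomorphism class of such a $U$, the strategy is to exhibit a common $U$ that embeds $G$-equivariantly into both $X$ and $X'$.

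To construct such a $U$, I would start with a $G$-equivariant birational map $\phi : X \dashrightarrow X'$ and restrict to a $G$-invariant open $V \subseteq X$ on which $\phi$ induces a $G$-equivariant isomorphism $V \xrightarrow{\sim} V' := \phi(V) \subseteq X'$. Next, I would choose a $G$-invariant collection of effective divisors $D_1,\ldots,D_r$ on $X$ whose classes generate $\Pic(X)$ -- available by taking $G$-orbits of effective representatives of a finite generating set, which exists because $\Pic(X)$ is finitely generated and free -- and set
$$
U := V \setminus \bigcup_{i=1}^r (D_i \cap V).
$$
This is a nonempty $G$-invariant open subset. Writing $U = V \cap (X \setminus \bigcup_i D_i)$, its Picard group is a quotient of $\Pic(X \setminus \bigcup_i D_i) = 0$, hence $\Pic(U) = 0$. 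The image $U' := \phi(U) \subseteq X'$ is then $G$-isomorphic to $U$, so automatically $\Pic(U') = 0$.

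With this common $U$ in hand, applying Proposition~\ref{prop:openinduce} to both $(X, U)$ and $(X', U')$ shows that the existence of the $G$-equivariant universal torsor on $X$ (resp.~$X'$) is equivalent to a $G$-equivariant splitting of the sequence above for $U$ (resp.~$U'$). Functoriality of the sequence in the open subset, applied to the $G$-isomorphism $U \cong U'$, identifies the two short exact sequences of $G$-modules and thus their splitting conditions. The only step of real substance is the construction of the common open $U$ with vanishing Picard group; once that is set up, the rest is a direct transfer through Proposition~\ref{prop:openinduce}.
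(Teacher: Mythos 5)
Your proposal is correct and takes essentially the same route as the paper, whose proof is a one-line sketch of exactly this idea: exhibit an open subset common to both varieties to which Proposition~\ref{prop:openinduce} applies, and transfer the splitting condition for $1 \to k^\times \to k[U]^\times \to k[U]^\times/k^\times \to 1$ through the $G$-isomorphism. Your write-up just fills in the details the paper omits (one tiny gloss: a generating set of $\Pic(X)$ need not consist of effective classes, but adding a large multiple of an ample class fixes this in the standard way).
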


\begin{proof}
Indeed, if $X$ and $Y$ are $G$-equivariantly birational then we can exhibit an affine open
subset common to both varieties for which Proposition~\ref{prop:openinduce} applies.
\end{proof}

In parallel with \cite[Prop. 2.9.2]{CTSansucDuke}, we have: 

\begin{prop}
The existence of a $G$-equivariant universal torsor is a stable $G$-birational property.
\end{prop}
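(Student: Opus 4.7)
The plan is to invoke Corollary~\ref{g-bir}, which together with the definition of stable $G$-birationality reduces the claim to showing that $X$ admits a $G$-equivariant universal torsor if and only if $X \times \bP^n$ does, where $G$ acts trivially on $\bP^n$. The preliminary step is to record the Picard decomposition: since $G$ acts trivially on $\Pic(\bP^n) = \bZ H$, the projective bundle formula gives $\Pic(X \times \bP^n) = \Pic(X) \oplus \bZ H$ as $G$-modules, hence $T_{\NS(X \times \bP^n)} = T_{\NS(X)} \times \bG_m$ as $G$-tori.

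For the forward implication, I would form the external product of a $G$-equivariant universal torsor $\cP \to X$ with the standard torsor $\bA^{n+1} \setminus \{0\} \to \bP^n$ (equipped with the trivial $G$-action on $\bA^{n+1}$). This yields a $G$-equivariant $(T_{\NS(X)} \times \bG_m)$-torsor on $X \times \bP^n$; under the decomposition above, its classifying map in $\Hom_G(\Pic(X) \oplus \bZ H, \Pic(X) \oplus \bZ H)$ is the direct sum of the two identities, hence the identity of $\Pic(X \times \bP^n)$, so the torsor is universal.

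For the converse, given a $G$-equivariant universal torsor $\cP' \to X \times \bP^n$, I would pick any $p \in \bP^n$ (automatically $G$-fixed), restrict $\cP'$ to the $G$-invariant subvariety $X \times \{p\} \cong X$, and then reduce the structure group from $T_{\NS(X)} \times \bG_m$ to $T_{\NS(X)}$ along the natural projection. Tracking the class through (\ref{LES}): the restricted torsor is classified by the pullback $\Pic(X) \oplus \bZ H \to \Pic(X)$ along the inclusion, which is the projection onto the first factor since $H$ restricts trivially; the structure-group reduction pre-composes this with the character inclusion $\Pic(X) \hookrightarrow \Pic(X) \oplus \bZ H$, yielding the identity of $\Pic(X)$. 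Hence $X$ admits a $G$-equivariant universal torsor. The only subtlety will be the bookkeeping of classes in $\Hom_G$, which follows from the naturality of (\ref{LES}) with respect to equivariant closed immersions and torus projections; no genuinely new computation is required beyond the arithmetic analogue in \cite[Prop.~2.9.2]{CTSansucDuke}.
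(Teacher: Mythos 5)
Your argument is correct, and its two key constructions are exactly the ones the paper uses in its own converse step: you form the external product of torsors for the forward direction, and for the converse you restrict to a slice $X\times\{p\}$ and push forward along the torus projection dual to $\pi_X^*\colon \Pic(X)\to\Pic(X\times\bP^n)$ (the paper performs these same two operations, merely in the opposite order). The genuine difference is one of scope. You exploit the paper's literal definition of stable $G$-birationality, in which the projective-space factors carry \emph{trivial} actions, so Corollary~\ref{g-bir} reduces everything to comparing $X$ with $X\times\bP^n$ (trivial action); there the universal torsor of the second factor is simply $\bA^{n+1}\setminus\{0\}$ and no further input is needed, beyond the (harmless) sign convention identifying its class with the identity of $\Pic(\bP^n)$. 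The paper instead proves a stronger intermediate statement: existence of a $G$-equivariant universal torsor is preserved under products with any smooth projective $W$ equivariantly birational to a \emph{linear, generically free} action on projective space. That generality costs two ingredients you avoid entirely: (i) in the forward direction, a torsor over $W$ must be manufactured from the facts that $\Pic(W)$ is stably permutation and that invariant line bundles on $W$ admit $G$-linearizations; (ii) in the converse, a No-Name Lemma argument --- realizing $\cO_{\bP^n}(-1)$ as the blowup of $\Gamma(\cO_{\bP^n}(1))^{\vee}$ at the origin --- is needed to trade the faithful linear action on $\bP^n$ for a trivial action on $\bP^{n+1}$, after which the paper's proof coincides with yours. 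What the paper's route buys is direct applicability when the complementary factor carries a nontrivial linearizable action (matching the vector-bundle characterization of stable equivalence via the No-Name Lemma); what your route buys is brevity, since for the proposition as stated the trivial-action case suffices.
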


\begin{proof}
Let $W$ be a smooth projective $G$-variety, equivariantly birational to a linear generically-free
action on projective space. Then $\Pic(W)$ is stably a permutation module and each invariant line bundle on $W$
admits a $G$ linearization. Thus the resulting torus $T_{\NS(W)}$ admits a torsor 
$\cQ \ra W$, equivariant under the $G$ action.  

If $X$ admits a universal torsor $\cP \ra X$ then the product
$$\pi_W^* \,\cQ \times \pi_X^*\cP \rightarrow W\times X$$
is a universal torsor for $X\times W$. 

Conversely, suppose that $W\times X$ admits a universal torsor.
Since the existence of a universal torsor is a $G$-birational property, we may assume
that $W=\bP^n$ and $G$ acts linearly and faithfully on $\bP^n$. 
It therefore acts on the associated affine space $\Gamma(\cO_{\bP^n}(1))^{\vee}$ and the universal subbundle
$\cO_{\bP^n}(-1)$.  The No-Name Lemma implies $G$-birational equivalences
$$
\cO_{\bP^n}(-1) \times X \stackrel{\sim}{\dashrightarrow} \bA^1 \times W \times X$$
and 
$$\Gamma(\cO_{\bP^n}(1))^{\vee} \times X \stackrel{\sim}{\dashrightarrow} \bA^{n+1} \times X$$
with trivial actions on the affine space factors. Moreover, $\cO_{\bP^n}(-1)$ is equal to the
blowup of $\Gamma(\cO_{\bP^n}(1))^{\vee}$ at the origin, thus $W\times X$ is stably birational to $\bA^{n+1}\times X$.

We therefore reduce ourselves to the situation where 
$\bP^{n+1}\times X$ admits a universal torsor
$$\cV \rightarrow \bP^{n+1} \times X$$
where $G$ acts trivially on the first factor.   
The pullback
homomorphism
$$\pi^*_X: \Pic(X) \ra \Pic(X\times \bP^{n+1})$$
allows us to produce a $T_{\NS(X)}$-torsor $\cR \rightarrow \bP^{n+1}\times X$. 
Choose a section of $\bP^{n+1}\times X \ra X$ and restrict $\cR$ to this section
to get the desired torsor on $X$.  
\end{proof}

\subsection{Torsors and stable linearization}

We record an equivariant version of 
Proposition~\ref{prop:stabratcrit}. 

\begin{prop}
\label{prop:eq-version}
Let $X$ be a smooth projective $G$-variety with $\Pic(X)=\NS(X)$.
Assume that $X$ admits a $G$-equivariant universal torsor $\cP$ such that 
\begin{itemize}
\item the $G$-action on $\cP$ is stably linearizable,
\item the $G$-action on $T_{\NS(X)}$ is stably linearizable,
\item $\cP\to X$ admits a $G$-equivariant rational section. \end{itemize}
Then the $G$-action on $X$ is stably linearizable. 
\end{prop}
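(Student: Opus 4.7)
The plan is to carry out an equivariant analog of the proof of Proposition~\ref{prop:stabratcrit}: stable linearizability will play the role of stable $k$-rationality, and the No-Name Lemma will play the role of Hilbert's Theorem~90.

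First I would use the $G$-equivariant rational section $s\colon X\dashrightarrow \cP$ to trivialize the torsor: the map $(x,t)\mapsto s(x)\cdot t$ is $G$-equivariant (the $T_{\NS(X)}$-action on $\cP$ commutes with the $G$-action up to the natural $G$-action on the torus), giving
\[
\cP \;\sim_G\; X\times T_{\NS(X)}.
\]
This reduces the problem to showing that $X\times T_{\NS(X)}$ is stably linearizable.

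Next I would combine the two stable linearizability hypotheses. Choose faithful representations $V_1, V_2$ and integers $a,b\geq 0$ with $\cP\times\bP^a\sim_G\bP(V_1)$ and $T_{\NS(X)}\times\bP^b\sim_G\bP(V_2)$. Substituting the trivialization into the first relation and multiplying by $\bP^b$ (with trivial $G$-action, as in the definition of stable $G$-birationality) yields
\[
X \times \bP(V_2) \times \bP^a \;\sim_G\; \bP(V_1) \times \bP^b.
\]

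Finally I would cancel the $\bP(V_2)$-factor on the left via the No-Name Lemma. Because $G$ acts generically freely on $X$, the trivial equivariant vector bundle $X\times V_2\to X$ is $G$-birational to $X\times\bA^{\dim V_2}$ with trivial action on the fiber; combined with $V_2\sim_G \bP(V_2)\times\bA^1$, this shows $X\times\bP(V_2)$ is stably $G$-birational to $X$ times a trivially-acted projective space. The right-hand side $\bP(V_1)\times\bP^b$ is itself stably $G$-birational to $\bP(V_1\oplus k^{b+1})$, a faithful linear action since $V_1$ is. Together, these equivalences exhibit $X$ as stably linearizable. The one delicate point is the bookkeeping in this last step---tracking faithfulness of the representations under direct sum with trivial summands, and freely interchanging $\bA^*$ and $\bP^*$ factors carrying the trivial $G$-action---but each of these manipulations is a routine application of the No-Name Lemma, and the overall structure parallels Proposition~\ref{prop:stabratcrit}.
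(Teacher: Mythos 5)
The paper itself does not write out a proof of Proposition~\ref{prop:eq-version}: it merely ``records'' it as the equivariant analog of Proposition~\ref{prop:stabratcrit}, leaving the argument implicit. Your proposal supplies exactly that intended argument: the $G$-equivariant rational section trivializes the torsor, $\cP\sim_G X\times T_{\NS(X)}$ (the same trivialization that appears in Proposition~\ref{prop:split}), the two stable-linearizability hypotheses are multiplied together, and the extra linear factor is removed by the No-Name Lemma. In structure your proof is correct and is the one the paper intends; the same cancellation pattern appears in the paper's proofs of Proposition~\ref{prop:quads} and of the stable $G$-birational invariance of universal torsors.

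However, one intermediate claim is false as literally stated and needs repair. You assert $V_2\sim_G\bP(V_2)\times\bA^1$ with trivial action on the $\bA^1$-factor. This fails whenever the composition $G\to\GL(V_2)\to\PGL(V_2)$ has nontrivial kernel: for instance, a cyclic group acting on $V_2$ through a faithful scalar character acts trivially on $\bP(V_2)\times\bA^1$ but faithfully on $V_2$, so the two cannot be $G$-birational. Nothing in the hypotheses rules this out --- the representation $V_2$ produced by stable linearization of the torus action may well contain scalars. The fix is to never leave the product with $X$: blowing up $X\times\{0\}$ inside $X\times V_2$ exhibits a $G$-equivariant line bundle over $X\times\bP(V_2)$; since $G$ acts generically freely on $X$, it acts generically freely on $X\times\bP(V_2)$, so the No-Name Lemma applies to this line bundle and gives $X\times V_2\sim_G X\times\bP(V_2)\times\bA^1$ with trivial action on $\bA^1$. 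Combined with $X\times V_2\sim_G X\times\bA^{\dim V_2}$ (No-Name over $X$, as you wrote), this yields the cancellation $X\times\bP(V_2)\,\sim_G\,X$ stably, and the rest of your bookkeeping goes through. A last point worth making explicit: the action on $\bP(V_1)$ is generically free, because $\cP$ is generically free (being equivariant over the generically free $X$) and generic freeness is preserved under the stable equivalences used; this is what legitimizes the final identification of $X$, up to stable $G$-birational equivalence, with a faithful generically free linear action.
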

There is no harm in assuming merely that $\cP$ is stably linearizable
as our conclusion on $X$ is a stable property.
\begin{coro}
\label{coro:linearbytorsor}
Let $X$ be a smooth projective $G$-variety with $\Pic(X)=\NS(X)$;
assume $\NS(X)$ is stably a permutation module. If $X$ admits a $G$-equivariant
universal torsor $\cP$ with stably linear $G$-action then the $G$-action on 
$X$ is stably linearizable as well.
\end{coro}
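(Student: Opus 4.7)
The plan is to verify the three hypotheses of Proposition~\ref{prop:eq-version} and then invoke it directly. Since the conclusion we want is exactly the conclusion of that proposition, the content of the corollary is that our two hypotheses (stably-permutation $\NS(X)$ and stably linear action on $\cP$) imply all three bulleted conditions there.

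First, I would handle the $G$-action on the N\'eron-Severi torus $T_{\NS(X)}$. By construction $G$ acts on $T_{\NS(X)}$ through $\Aut(T_{\NS(X)})$ via its action on the character lattice $\NS(X)$, so this is a translation-free action in the sense of Section~\ref{sect:free}, i.e., $G \cap T_{\NS(X)}(k) = 1$. The cocharacter module $N$ is the $\bZ$-dual of $\NS(X)$; since the $\bZ$-dual of a permutation $G$-module is again a permutation $G$-module (the dual basis is still permuted by $G$), and dualizing commutes with direct sums, the stably-permutation property of $\NS(X)$ transfers to $N$. Proposition~\ref{prop:toric:mor} then gives that the $G$-action on $T_{\NS(X)}$ is stably linearizable.

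Second, I would apply Proposition~\ref{prop:split}: since $\NS(X)$ is stably permutation and $\cP \to X$ is a $G$-equivariant universal torsor, there is a $G$-equivariant rational section $X \dashrightarrow \cP$ (and in fact $\cP \sim_G T_{\NS(X)} \times X$). This gives the third bulleted hypothesis of Proposition~\ref{prop:eq-version}. The first bulleted hypothesis there, that the $G$-action on $\cP$ is stably linearizable, is given by assumption. Combining these three facts with Proposition~\ref{prop:eq-version} yields that the $G$-action on $X$ is stably linearizable.

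There is essentially no main obstacle here; the corollary is a bookkeeping consequence of the preceding propositions. The only subtle point worth flagging explicitly is the duality argument that propagates the stably-permutation hypothesis from characters to cocharacters, which is needed to access Proposition~\ref{prop:toric:mor}. Everything else is a direct citation.
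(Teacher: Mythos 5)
Your proof is correct and follows essentially the same route as the paper: the paper likewise deduces the corollary by checking the three hypotheses of Proposition~\ref{prop:eq-version}, citing Proposition~\ref{prop:split} for the last two conditions and the assumption on $\cP$ for the first. Your explicit duality step---transferring the stably-permutation property from $\NS(X)$ to the cocharacter lattice so that Proposition~\ref{prop:toric:mor} applies to $T_{\NS(X)}$---is exactly the detail the paper leaves implicit, and it is correct.
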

Indeed, the last two conditions of Proposition~\ref{prop:eq-version}
follow if $\NS(X)$ is a stably permutation module by Proposition~\ref{prop:split}. 

\subsection{Universal torsors and Cox rings}
\label{subsect:cox} 
Suppose $X$ is a smooth projective variety that has a universal torsor $\cP \rightarrow X$. 
In some cases, there is a natural embedding of $\cP$ into affine space, realizing
$X$ is a subvariety of a toric variety. 
Specifically, assume that the {\em Cox ring} 
$$
\Cox(X):= \oplus_{L \in \Pic(X)}\, \Gamma(X,L),
$$
graded by the Picard group and with multiplication induced by tensor product of line bundles, is 
finitely generated (see, e.g., \cite{ADH} for definitions and properties). This is the case for Fano varieties, for example \cite{KeelHu,BCHM}.  
Then there is a natural open embedding
$$\cP \hookrightarrow \Spec(\Cox(X)),$$
compatible with the actions of $T_{\NS(X)}$ associated with the torsor structure 
and the grading respectively.
Fixing a finite set $\{x_{\sigma}\}_{\sigma \in \Sigma}$ of graded generators for $\Cox(X)$, we obtain
an embedding 
$$
\Spec(\Cox(X)) \hookrightarrow \bA^{\Sigma}.
$$
Taking a quotient of the codomain by $T_{\NS(X)}$ gives a toric variety (see Section~\ref{sect:toric}); choosing a quotient
associated with a linearization of an ample line bundle $L$ on $X$ gives the desired embedding
$$X \hookrightarrow [\bA^{\Sigma}/T_{\NS(X)}]_L.$$

Our focus is the extent to which these constructions can be performed equivariantly (when $X$
comes with a $G$-action) or over non-closed fields. We emphasize that the Cox-ring formulation
is equivalent to the universal torsor framework when the torsor exists.

\subsection{General results on linearizable actions}
For this last section, we return to the general question of characterizing group actions that
are birational or stably birational.

\begin{prop}
\label{prop:twist}
Let $X$ be a smooth projective variety and $G$ a finite group acting regularly and generically freely on $X$.
Given an automorphism $a:G \rightarrow G$, let ${ }^aX$ denote the resulting twisted action
of $G$ on $X$. If the $G$-action on $X$ is stably linearizable then ${ }^aX$ is stably equivariantly birational to $X$,
hence stably linearizable as well. 
\end{prop}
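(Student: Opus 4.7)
The plan is to combine functoriality of the twist operation with the stable $G$-birationality of any two faithful linear actions, a direct consequence of the No-Name Lemma invoked in the introduction.

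First I would unpack the hypothesis: stable linearizability of $X$ produces a faithful $G$-representation $V$ and an integer $n$ with
\[
X \times \bP^n \sim_G \bP(V),
\]
where $G$ acts trivially on $\bP^n$. Next I would apply the twist functor $Y \mapsto {}^a Y$ to both sides. This is a self-functor on the category of $G$-varieties and $G$-equivariant rational maps, the underlying observation being that the equivariance $f(gy)=gf(y)$ becomes equivariance for the twisted actions upon substituting $a(g)$ for $g$. It commutes with products and fixes trivial actions, so one obtains
\[
{}^a X \times \bP^n \sim_G \bP({}^a V),
\]
where ${}^a V$ is the faithful representation obtained by composing $G \to \GL(V)$ with $a$.

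The main step is then to establish that $\bP(V)$ and $\bP({}^a V)$ are stably $G$-birational. The two $G$-equivariant projections
\[
V \oplus {}^a V \longrightarrow V, \qquad V \oplus {}^a V \longrightarrow {}^a V
\]
are trivial vector bundles over bases with generically free $G$-action, so the No-Name Lemma yields
\[
V \times \bA^{\dim V} \sim_G V \oplus {}^a V \sim_G {}^a V \times \bA^{\dim V}
\]
with trivial action on each $\bA$-factor. Composing with the standard $G$-equivalence $V \sim_G \bA^1 \times \bP(V)$ (trivial action on the $\bA^1$-factor, arising from the $\bG_m$-bundle $V\setminus \{0\} \to \bP(V)$) transfers this equivalence to projectivizations, producing the stable $G$-birationality of $\bP(V)$ and $\bP({}^a V)$. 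Stringing the three equivalences together gives $X$ stably $G$-birational to ${}^a X$, and the ``hence stably linearizable'' clause follows immediately.

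I expect the only subtle point to be verifying the functoriality of twisting — specifically, that a $G$-equivariant rational map for the original actions is still equivariant for the twisted actions on both source and target, and that the equivalence $V \sim_G \bA^1 \times \bP(V)$ genuinely has trivial action on the line factor (it does, because the $\bG_m$-scaling on $V$ commutes with the linear $G$-action). Beyond this bookkeeping, the argument is a formal assembly of the No-Name Lemma with the functoriality of the twist.
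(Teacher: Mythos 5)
Your overall strategy --- twist the stable linearization, then use the No-Name Lemma to identify the twisted linear model with the untwisted one --- is the same as the paper's. The paper runs it with affine models: from $X \times \bA^n \sim_G \bA^{d+n}$ (linear actions on the affine spaces) it twists by $a$, deduces $X \times {}^a\bA^{d+n} \sim_G {}^aX \times \bA^{d+n}$, and then applies No-Name over $X$ and ${}^aX$ to trivialize the affine factors; you run it through projectivizations and the doubled representation $V \oplus {}^aV$. Your steps (a) and (b) are fine; in particular the bases $V$ and ${}^aV$ in (b) do carry generically free actions, since a faithful linear action of a finite group in characteristic zero is automatically generically free.

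The gap is in the step you yourself flag as the subtle one, and the justification you give for it is not valid. The claim $V \sim_G \bA^1 \times \bP(V)$, with trivial action on the $\bA^1$-factor, does \emph{not} follow from the fact that $\bG_m$-scaling commutes with the $G$-action: that commutation holds for every linear representation, yet the claim is false in general. Take $G = \mathfrak C_2$ acting on $V = k^2$ by $v \mapsto -v$; this is faithful and generically free on $V$, but $\bP(V) = \bP^1$ carries the trivial action, so $\bA^1 \times \bP(V)$ with trivial line factor has trivial $G$-action and cannot be $G$-birational to $V$. What the step actually requires is the No-Name Lemma applied to the line bundle $\cO_{\bP(V)}(-1) \rightarrow \bP(V)$, whose total space is $G$-birational to $V$; and No-Name needs the $G$-action on the base $\bP(V)$ to be generically free, i.e., no nontrivial element of $G$ may act on $V$ by a scalar. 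This is not automatic for faithful $V$, but it does hold in your situation and must be said: since $\bP(V) \sim_G X \times \bP^n$ with trivial action on $\bP^n$, and the action on $X$ is generically free, the action on $\bP(V)$ is generically free as well; the same remark covers $\bP({}^aV)$ after twisting. With this one-line repair (which also legitimizes your initial unpacking of stable linearizability as $X \times \bP^n \sim_G \bP(V)$), your argument closes up and is essentially the paper's proof in projective clothing.
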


\begin{proof}
Our assumption implies the existence of linear representations
$$G \times \bA^n \rightarrow \bA^n, \quad G \times \bA^{d+n} \rightarrow \bA^{d+n}, d=\dim(X),$$
such that 
$$
X\times \bA^n\sim_G \bA^{d+n}.
$$
Twisting by $a$, we find that
$${ }^aX \times { }^a \bA^n \sim_G { }^a\bA^{d+n}.
$$
It follows that
$$
X \times { }^a\bA^{d+n} \sim_G { }^aX \times \bA^{d+n}.
$$
The No-Name Lemma implies that these are birational to 
$$X \times \bA^{d+n}, { }^a X\times \bA^{d+n},$$
where the actions on the affine spaces are trivial. This gives the stable birational equivalence.
\end{proof}

\section{Stable linearization of actions on toric varieties}
\label{sect:torvar}

\subsection{Toric varieties}
\label{sect:toric}

Let $X=X_{\Sigma}$ be a $T$-equivariant compactification of $T$, where $\Sigma$ is a {\em fan}, i.e., a collection $\Sigma=\{ \sigma \}$ of cones in the cocharacter group $N:=\mathfrak X_*(T)$ of $T$ (see, e.g., \cite{fulton} for terminology regarding toric varieties). Let $\Sigma(i)\subset \Sigma$ be the collection of $i$-dimensional cones. 
A complete determination of the automorphism group $\Aut(X)$ can be found in \cite{sancho}. Conversely, given a finite group $G\subset \Aut(T)$ there exists a smooth projective $T$-equivariant compactification of $T$, with regular $G$ action. 

Suppose $T$ is a $G$-torus.
We say that $X$ is a {\em $T$-toric variety} if there exists a $G$-equivariant action $X\times T \rightarrow X$
such that $X$ has a dense $T$-orbit with trivial generic stabilizer. Note that $X$ need not have $G$-fixed points but does admit a distinguished Zariski-open subset that is a torsor for $T$.   
  
We record a corollary of Proposition~\ref{prop:twist}:

\begin{coro}
Let $X$ denote a $T$-toric variety that is stably linearizable. 
Given an element $a\in \Aut(X)^G$, the twist ${ }^aX$ is stably linearizable as well and 
$G$-birational of $X$.  
\end{coro}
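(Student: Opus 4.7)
The plan is to unpack the meaning of the twist so that the map $a$ itself provides the $G$-equivariant isomorphism, and then read off stable linearizability as a corollary of Proposition~\ref{prop:twist} (or simply of its being a $G$-birational invariant). Concretely, I view an element $a\in\Aut(X)^G$ as an automorphism of $X$ normalizing the image of $G$ in $\Aut(X)$, so that conjugation $g\mapsto aga^{-1}$ defines an automorphism $\phi_a\colon G\to G$. The twist ${}^aX$ is then the variety $X$ equipped with the $G$-action $g\cdot_a x := \phi_a(g)\cdot x$, matching the convention of Proposition~\ref{prop:twist}.

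Next, I would write down the calculation that realises $a$ as a $G$-equivariant isomorphism ${}^aX\to X$. For $g\in G$ and $x\in X$,
\[
a(g\cdot_a x)=a\bigl((aga^{-1})\cdot x\bigr)=g\cdot a(x),
\]
so $a$ intertwines the twisted action on the source with the original action on the target. In particular ${}^aX\simeq_G X$, which yields ${}^aX\sim_G X$ and proves the $G$-birationality assertion.

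Finally, I would deduce stable linearizability of ${}^aX$ in either of two equivalent ways: directly, because stable linearizability is a $G$-birational invariant and we have just exhibited ${}^aX\sim_G X$ with $X$ stably linearizable; or by quoting Proposition~\ref{prop:twist} applied to the automorphism $\phi_a\in\Aut(G)$, which gives the conclusion without using the extra information that the twist is realised by an honest automorphism of $X$.

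The only real subtlety is settling on the correct interpretation of $\Aut(X)^G$ in the statement; once one agrees that $a$ normalises $G$ in $\Aut(X)$, so that conjugation by $a$ defines $\phi_a\in\Aut(G)$, every step is a one-line verification. No toric hypothesis enters the argument: the role of the $T$-toric assumption is only to supply a natural and abundant source of such $G$-compatible automorphisms $a$ (coming from $T(k)$ and from fan-preserving elements of $\Aut(T)$ commuting with $G$), making the corollary useful in practice.
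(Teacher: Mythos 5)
Your proof has an internal slip and, more importantly, rests on an interpretive choice that trivializes the statement. The slip first: with your conventions $g\cdot_a x=\phi_a(g)\cdot x$ and $\phi_a(g)=aga^{-1}$, one computes $a(g\cdot_a x)=(a^2ga^{-1})(x)$, which equals $g\cdot a(x)$ only when $a^2$ centralizes $g$; the equivariant isomorphism ${}^aX\to X$ is $a^{-1}$, not $a$ (or else one should set $\phi_a(g)=a^{-1}ga$). This is cosmetic, and with the fix your argument is valid for the reading you adopt: if $a$ normalizes the image of $G$ in $\Aut(X)$ and ${}^aX$ denotes the conjugated action, then ${}^aX\simeq_G X$ and stable linearizability transports across the isomorphism. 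Since the paper records the corollary with no argument beyond the pointer to Proposition~\ref{prop:twist}, your secondary route (conjugation gives $\phi_a\in\Aut(G)$, quote the proposition) is presumably what the authors intend, and your primary route sharpens its conclusion from stable $G$-birationality to honest $G$-isomorphism, which is the only way to account for the unqualified ``$G$-birational'' in the statement.

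The genuine gap is the point you raise and then resolve by fiat. Read literally, $\Aut(X)^G$ is the fixed locus of the conjugation action, i.e.\ the centralizer of $G$ in $\Aut(X)$, not its normalizer: for such $a$ your $\phi_a$ is the identity, ${}^aX$ is $X$ with the very same action, and your proof establishes a vacuous statement. Moreover, on any conjugation reading neither hypothesis of the corollary ($T$-toric, stably linearizable) is ever used --- you advertise this as a virtue, but it is a warning sign that the reading is wrong. In the toric context of this section the meaningful twists are cocycle twists: modifying the $G$-action by a cocycle valued in $T(k)$, equivalently changing the class $[U]\in\rH^1(G,T)$ of the dense orbit (Section~\ref{sect:cox-toric}); note that cocycles valued in the centralizer $\Aut(X)^G$ are exactly homomorphisms $G\to\Aut(X)^G$, which is how a single invariant automorphism $a$ naturally produces twisting data. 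Such twists genuinely change the equivariant birational type --- the $\bP^1$ example of Section~\ref{sect:p1} is precisely this phenomenon --- so no intertwining automorphism of $X$ can exist, and proving even stable $G$-birationality of the twist genuinely requires the stable linearizability hypothesis together with Proposition~\ref{prop:twist} (applied, for instance, to the group generated by $G$ and $a$) or the torsor formalism. Under that reading your proposal does not engage with the statement at all; you need to pin down which twist is meant, since your argument only covers the reading on which the corollary has no content.
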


If the cocharacter module $N$ of $T$ is stably permutation then a smooth projective 
$T$-equivariant compactification $T \subset X$ has Picard group $\Pic(X)$ that
is also stably a permutation module. 

Indeed, we have an exact sequence
\begin{equation} 
\label{eq:SPM} 
0 \rightarrow M \rightarrow \Pic_T(X) \rightarrow \Pic(X), \rightarrow 0,
\end{equation}
where the central term is a permutation module indexed by vectors generating the one-skeleton 
of the fan. 
The exact sequence (\ref{eq:SPM}) shows that $M$ is stably permutation if and only if $\Pic(X)$ is stably permutation.

\subsection{Universal torsors for toric varieties}
\label{sect:cox-toric}

Let $X\times T \rightarrow X$ denote a $T$-toric variety, where $X$ is smooth and projective. 
Ignoring the action of $G$, $\Cox(X)$ is a polynomial ring $k[x_{\sigma}], \sigma \in \Sigma(1),$
indexed by the $1$-skeleton, i.e., generators of the one-dimensional cones in the fan of $X$. 
Of course, the group $G$ permutes the elements of $\Sigma(1)$ and if $X$ admits a $T$-fixed
point -- invariant under $G$ -- then $\Spec(\Cox(X))$ is the affine space $\bA^{\Sigma(1)}$ 
with the induced permutation action of $G$.  

However, when the dense open orbit of $X$ is a nontrivial principal homogeneous space
$$U \times T \rightarrow U$$
it may not be possible to lift the $G$-action compatibly to $\Spec(\Cox(X))$.  
We can identify the cohomology class governing the existence a lifting. Dualizing (\ref{eq:SPM}) gives
$$ 1 \rightarrow T_{\NS(X)} \rightarrow \bG_m^{\Sigma(1)} \rightarrow T \rightarrow 1,$$
encoded by a class $\eta \in \Ext^1_G(T,T_{\NS(X)})$.  
The principal homogeneous space is classified by 
$$[U] \in \rH^1(G,T)$$
and its image under the connecting homomorphism
$$ \partial([U]) = \pm [U] \smile \eta \in \rH^2(G,T_{\NS(X)})$$
is the obstruction to finding a cocycle in $\rH^1(G,\bG_m^{\Sigma(1)})$ 
lifting $[U]$.


\subsection{Actions on $\bP^1$}
\label{sect:p1}

The presence of translations marks an essential discrepancy 
in the analogy between the rationality problem over nonclosed fields and the linearizability problem of actions of finite groups over closed fields, as can be seen from the following example:

Let 
$$
G=\left<\iota_1,\iota_2\right>=\mathfrak C_2 \times \mathfrak C_2
$$ 
and $T$ a one-dimensional torus with $G$ action
$$
\iota_1\cdot t = t^{-1}, \quad \iota_2\cdot t = -t.
$$
Consider an action 
$$\begin{array}{rcl}
T\times \bP^1 & \rightarrow & \bP^1 \\
t\cdot [x,y] & \mapsto & [tx,y].
\end{array}
$$
Let $G$ act on $\bP^1$ by
$$\iota_1 \cdot [x,y]=[y,x], \quad \iota_2 \cdot [x,y]=[-x,y],$$
which is well-defined.  
However, this action does not lift to a linear action of $G$ on $\bA^2$ because
$$\left(\begin{matrix} 0 & 1 \\ 1 & 0 \end{matrix} \right) \left(\begin{matrix} 1 & 0 \\ 0 & -1 \end{matrix} \right) = - \left(\begin{matrix} 1 & 0 \\ 0 & -1 \end{matrix} \right)
\left(\begin{matrix} 0 & 1 \\ 1 & 0 \end{matrix}\right).$$
The Amitsur invariant is
$$
\mathrm{Am}(\bP^1,G)=\bZ/2,
$$
so that this action is not stably linearizable. Alternatively, one may observe that $G$ has no fixed points on $\bP^1$, which is also an obstruction to stable linearizability. 

On the other hand, let 
$$G:=\left<\iota,\sigma:\iota^2=\sigma^3=1, \iota\sigma\iota=\sigma^{-1} \right>
\simeq \fS_3.$$
We continue to have $\iota$ act as $\iota_1$ did above. Let 
$$
\sigma\cdot [x,y]=[\omega x, y], \quad \omega = e^{2\pi i/3}.
$$
This {\em does} lift to a linear action of $G$ on $\bA^2$, e.g., by expressing
$$
\sigma\cdot [x,y]=[\zeta x, \zeta^{-1} y], \quad \zeta = e^{2\pi i/6}.
$$
Again, $G$ has no fixed points on $\bP^1$, but this is {\em not} an obstruction to linearizability, for nonabelian groups.

\subsection{Linearizing actions with translations}

\begin{prop}
\label{prop:linear-torus}
Let $T$ be a $G$-equivariant torus and $X\times T \rightarrow X$ a smooth projective $T$-toric variety.
Assume that
\begin{itemize}
\item{$M=\hat{T}$ is a stably permutation $G$-module;}\
\item{the obstruction $\alpha=\partial(\operatorname{Id}) \in \rH^2(G,T_{\NS(X)})$ vanishes.}
\end{itemize}
Then the $G$-action on $X$ is stably linearizable.
\end{prop}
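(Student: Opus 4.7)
My plan is to apply Corollary~\ref{coro:linearbytorsor}, whose hypotheses are that $\NS(X) = \Pic(X)$ be stably a permutation $G$-module and that $X$ admit a $G$-equivariant universal torsor $\cP$ with stably linear $G$-action; the two assumptions of the proposition are tailored to supply exactly these.

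First, the hypothesis that $M = \hat T$ is stably permutation, combined with the exact sequence (\ref{eq:SPM}) whose middle term is the permutation module $\bZ^{\Sigma(1)}$ on the one-skeleton (with $G$ acting through its permutation of $\Sigma(1)$), yields that $\Pic(X)$ is stably permutation as well, by the equivalence recorded immediately after (\ref{eq:SPM}).

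Second, the vanishing of $\alpha = \partial(\operatorname{Id})$ in the exact sequence (\ref{LES}) with $T = T_{\NS(X)}$ lifts the identity class in $\Hom_G(\NS(X), \Pic(X))$ to $\rH^1_G(X, T_{\NS(X)})$, producing a $G$-equivariant universal torsor $\cP \to X$. Since the toric variety $X$ has polynomial Cox ring $k[x_\sigma : \sigma \in \Sigma(1)]$, the torsor $\cP$ embeds as a $G$-invariant open subset of $\Spec(\Cox(X)) = \bA^{\Sigma(1)}$. The $G$-action on $\Cox(X)$ preserves the $\Pic(X)$-grading, so each generator $x_\sigma$, of degree $[D_\sigma]$, is sent to a homogeneous element of degree $[D_{g\sigma}]$; consequently the action on $\bA^{\Sigma(1)}$ is linear, permuting generators up to scalar multiplication within each graded component. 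Restricted to $\cP$, this yields a stably linear $G$-action, and Corollary~\ref{coro:linearbytorsor} completes the argument.

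The main subtlety is this third step: I must verify that the lift of the $G$-action to $\Spec(\Cox(X))$ both exists and is linear. Existence follows from the vanishing of $\alpha$ via the long exact sequence (\ref{LES}), while linearity is automatic from the grading constraint, which forces $G$ to act on the homogeneous generators within each graded piece of $\Cox(X)$.
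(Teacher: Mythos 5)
Your overall architecture coincides with the paper's: the paper likewise extracts the $G$-equivariant universal torsor from the vanishing of $\alpha$, embeds it in $\bA^{\Sigma(1)}$, deduces $\Pic(X)$ is stably permutation from the hypothesis on $M$ via (\ref{eq:SPM}), and concludes via Proposition~\ref{prop:split} together with Proposition~\ref{prop:toric:mor} --- which is exactly the content of the Corollary~\ref{coro:linearbytorsor} that you invoke. So the route is essentially the same, merely packaged through the corollary.

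However, your justification of the key linearity claim has a genuine gap. You argue that $g\cdot x_\sigma$ is homogeneous of degree $[D_{g\sigma}]$ and that ``consequently'' the action on $\bA^{\Sigma(1)}$ is linear, later asserting that ``linearity is automatic from the grading constraint.'' This inference fails in general: the graded piece of $\Cox(X)$ in degree $[D_{g\sigma}]$ is $\Gamma(X,\cO(D_{g\sigma}))$, which is spanned by \emph{all} monomials of that degree, not only by $x_{g\sigma}$. For instance, on the Hirzebruch surface $\mathbb{F}_1$, with rays $(1,0),(0,1),(1,1),(-1,-1)$ and boundary classes $f,f,e,e+f$, the graded piece of degree $e+f$ is spanned by $x_0,\ x_E x_1,\ x_E x_2$; a grading-preserving ring automorphism could a priori send $x_0$ to $a\,x_0+b\,x_E x_1+c\,x_E x_2$, which is not linear. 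What actually forces the action to be monomial (hence linear) is geometry plus factoriality: since the $G$-action on $\cP$ covers the action on $X$ and extends to $\Spec(\Cox(X))$ (the complement of $\cP$ has codimension at least two, so $\Gamma(\cP,\cO_{\cP})=\Cox(X)$), the function $g^* x_\sigma$ vanishes exactly, to order one, along the unique prime divisor of $\Spec(\Cox(X))$ lying over $D_{g^{-1}\sigma}$; because $\Cox(X)$ is a polynomial ring --- a UFD whose units are scalars --- this forces $g^* x_\sigma = c\, x_{g^{-1}\sigma}$ with $c\in k^\times$. With this repair, which is what the paper's proof means by ``affine space with permutation structure given by the action of $G$ on the $1$-skeleton,'' your argument goes through.
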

\begin{proof}
The vanishing assumption shows that $X$ admits a universal torsor $\cP \ra X$ with $G$-action. 
Moreover, we have an open embedding
$$\cP \hookrightarrow \bA^n$$
where $\bA^n$ is an affine space with permutation structure given by the action of $G$ on the
$1$-skeleton of $X$.  

By Proposition~\ref{prop:split} we have $\cP \sim_G T_{\NS(X)}\times X$; the first
factor is stably linearizable by Proposition~\ref{prop:toric:mor}. Since $\cP$ is 
linearizable we conclude $X$ is stably linearizable.
\end{proof}

\begin{ques}
Let $G$ be a finite group, $T$ a $G$-torus, and $X$ a $T$-toric variety. Consider the following conditions: 
\begin{itemize}
\item{the obstruction $\partial(\operatorname{Id}) \in \rH^2(T_{\NS(X)})$ to the existence of a universal torsor vanishes;}
\item{for each $T$-orbit closure $Y\subseteq X$ and subgroup $H\subseteq G$ leaving 
$Y$ invariant, the Amitsur invariant $\mathrm{Am}(Y,H/K)$ vanishes, where $K$ is the subgroup
acting trivially on $Y$.}
\end{itemize}
Are they equivalent?
\end{ques}

Clearly the first implies the second. Recall that the restriction
$$
\Pic(X) \rightarrow \Pic(Y)
$$
can be made to be surjective on a suitable $G$-equivariant smooth projective model of $X$, with induced $T$-closure $Y\subset X$. See, e.g., Sections 2.3--2.5 of \cite{KT-toric}.  

\section{Sextic del Pezzo surfaces}
\label{sect:dp6}

Here we consider actions on the toric surface 
$$
X\subset  \bP^1\times \bP^1\times \bP^1,
$$
given by 
\begin{equation}
\label{eqn:dp6}
X_1X_2X_3 = W_1W_2W_3.
\end{equation}
It has distinguished loci
$$
L_1=\{X_3=W_2=0\}, \, L_2=\{X_1=W_3=0\}, \,
L_3=\{X_2=W_1=0\},
$$
$$
E_{12}=\{X_1=W_2=0\},E_{13}=\{X_3=W_1=0\}, 
E_{23}=\{X_2=W_3=0\}.
$$
Recall that the universal torsor may be realized as an open subset of $\bA^6$ with variables
$$\lambda_1,\lambda_2,\lambda_3,\eta_{12},\eta_{13},\eta_{23},
$$
where
\begin{align*}
X_1=\lambda_2 \eta_{12}, &\quad W_1=\lambda_3 \eta_{13}, \\
X_2=\lambda_3 \eta_{23}, &\quad W_2=\lambda_1 \eta_{12}, \\
X_3=\lambda_1 \eta_{13}, &\quad W_3=\lambda_2\eta_{23}.
\end{align*}
Write 
$$
\Pic(X)=\bZ H + \bZ  E_1 + \bZ E_2 + \bZ E_3
$$
with associated torus 
$$\operatorname{Spec} k[s^{\pm 1}_0,s^{\pm 1}_1,s^{\pm 1}_2,s^{\pm 1}_3]$$
acting via 
$$\lambda_i \mapsto s_i \lambda_i, \quad 
\eta_{ij} \mapsto s_0s_i^{-1}s_j^{-1}\eta_{ij}.$$


\subsection{Action by toric automorphisms}
\label{subsect:autdP6}

Consider the automorphisms of $X$ fixing the distinguished point
$$(1,1,1)=\{X_1=X_2=X_3=W_1=W_2=W_3=1\}.$$
Equivalently, these are induced from automorphisms of the torus
$$T=X \setminus (L_1\cup E_{12}\cup L_2 \cup E_{23} \cup L_3 \cup E_{13}).$$
These are isomorphic to $\fS_2\times \fS_3$ -- we can exchange the $X$ and $W$ variables or permute
the indices $\{1,2,3\}$. 
The induced action on the six-cycle of $(-1)$-curves
may be interpreted as the dihedral group of order $12$.  

Note that the associated exact sequence of $\fS_2\times \fS_3$-modules
$$
0 \rightarrow M \rightarrow \bZ\{(-1)\text{-curves}\} \rightarrow \Pic(X) 
\rightarrow 0
$$
splits.  

\begin{rema}
If $M$ and $P$ are stably permutation $G$-modules then
$\Ext^1_G(P,M)=0$.
This is Lemma 1 in \cite{Sansuc-CT}, which says that if $M$ is coflabby and 
$P$ is permutation then $\Ext^1_G(P,M)=0.$ However, stably permutation modules
are flabby and coflabby \cite[p.~179]{Sansuc-CT}.  
\end{rema}

The $\fS_2\times \fS_3$ action lifts to the Cox ring: For example, let $\fS_3$
act via permutation on the indices and $\fS_2$ by
$$\lambda_i \mapsto \eta_{jk}, \quad  \eta_{jk} \mapsto \lambda_i, \quad \{i,j,k\}=\{1,2,3\}.$$

\subsection{Sextic del Pezzo surface with an $\fS_4$-action}

Assume that $G$ contains nontrivial translations of the torus 
$T=\mathbb G_m^2\subset X$. 
In \cite{sarikyan} it is shown that, on {\em minimal} 
sextic Del Pezzo surfaces, such $G$-actions are not linearizable.

As an example, consider $G:=\fS_4$ acting on $X$ via $\fS_3$-permutations of the factors 
$$
x_1:=X_1/W_1, \quad  x_2:=X_2/W_2, \quad x_3:=X_3/W_3,
$$
and additional involutions (translations)
$$
\iota_1:(x_1,x_2,x_3)\mapsto (-x_1,x_2,-x_3), \quad \iota_2:(x_1,x_2,x_3)\mapsto (-x_1,-x_2,x_3).
$$
Here we have 
$G\cap T(k) = \mathfrak C_2\times \mathfrak C_2$, with $G$ acting on 
$\Aut(N)$ via $\fS_3$. The six exceptional curves form a single $G$-orbit, each curve has generic stabilizer $\mathfrak C_2$ and a nontrivial $\mathfrak C_2$-action.

Using the theory of {\em versal} $G$-covers, 
Bannai-Tokunaga showed that the 
$G$-actions on $\bP^2=\bP(V)$, where $V$ is the standard 3-dimensional representation of $\fS_4$, and on 
\eqref{eqn:dp6}, as described above, are not birational
\cite{bannai}.
Alternative proofs, using the equivariant Minimal Model Program for surfaces, respectively, the Burnside group formalism, can be found in
\cite[Section 3.4]{sarikyan}, respectively \cite[Section 9]{KT-struct}. These approaches cannot be used to study stable linearizability. 

\begin{prop}
\label{prop:dp6-s4}
The $\fS_4$-action is stably linearizable.
\end{prop}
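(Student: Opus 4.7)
The plan is to apply Proposition~\ref{prop:linear-torus}, which reduces the claim to two checks: (i) the character lattice $M = \hat{T}$ is a stably permutation $\fS_4$-module, and (ii) the obstruction $\partial(\operatorname{Id}) \in \rH^2(\fS_4, T_{\NS(X)})$ vanishes.

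For (i), note that the translations $V_4 = G \cap T(k)$ act trivially on $T$, so the $\fS_4$-action on $M$ factors through $\fS_3 = \fS_4/V_4$. Over $\fS_3$, the splitting of the exact sequence $0 \to M \to \bZ^{\Sigma(1)} \to \Pic(X) \to 0$ noted in Section~\ref{subsect:autdP6} realizes $M$ as a direct summand of the permutation module on the six boundary curves. Since $\Pic(X) = \bZ H \oplus \bigoplus_{i=1}^3 \bZ E_i$ is itself a permutation $\fS_3$-module (with $\fS_3$ permuting the $E_i$ and fixing $H$), complementation shows $M$ is stably permutation over $\fS_3$, hence over $\fS_4$ by inflation.

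For (ii), I would build an explicit $\fS_4$-equivariant lift of the action to $\Spec(\Cox(X)) = \bA^6$ with coordinates $\lambda_i, \eta_{jk}$. The $\fS_3$-subgroup lifts in the obvious way, permuting indices of the generators. The translations are more subtle. Writing $\iota_{ij}$ for the translation that negates $x_i$ and $x_j$ (so that $\iota_1 = \iota_{13}$ and $\iota_2 = \iota_{12}$ in the paper's notation), a direct computation shows that the naive lift ``negate $\eta_{ij}$'' descends correctly to $X$; however, these naive lifts violate \emph{both} the $V_4$-relation $\iota_{12}\iota_{13} = \iota_{23}$ and $\fS_3$-equivariance. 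Crucially, both discrepancies are measured by the same $\fS_3$-invariant $2$-torsion element $\tau_0 \in T_{\NS(X)}[2]$, namely the torus element that negates all three $\eta_{jk}$ while fixing the $\lambda_i$. Correcting the naive lifts by this $\tau_0$ -- equivalently, letting $\tilde{\iota}_{ij}$ negate the \emph{other two} $\eta$-generators -- yields lifts satisfying $\tilde{\iota}_{12}\tilde{\iota}_{13} = \tilde{\iota}_{23}$ and $\sigma\,\tilde{\iota}_{ij}\,\sigma^{-1} = \tilde{\iota}_{\sigma(i)\sigma(j)}$ for $\sigma\in\fS_3$, assembling to a genuine $\fS_4$-action on $\bA^6$.

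The main obstacle is precisely this $2$-cocycle bookkeeping: the natural sign-change lifts appear forced, yet they fail to assemble into a group action, and one must recognize that a single torsion correction $\tau_0$ -- invisible at the level of $X$ but active on $\Cox(X)$ -- simultaneously resolves the $V_4$- and $\fS_3$-discrepancies. Once the lift is in hand, the universal torsor $\cP \subset \bA^6$ becomes $\fS_4$-equivariant, and Proposition~\ref{prop:linear-torus} (or equivalently Corollary~\ref{coro:linearbytorsor}) delivers stable linearizability of the $\fS_4$-action on $X$.
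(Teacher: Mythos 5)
Your proposal follows the same two\--step shape as the paper's argument (a module\--theoretic hypothesis plus an explicit lift to the Cox ring), but both steps contain genuine errors, and the first one blocks the route through Proposition~\ref{prop:linear-torus} entirely.

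\emph{The hypothesis on $M$ is false, not just unproven.} The $\fS_3\subset\fS_4$ of the proposition contains the $3$-cycles, and the character lattice of $T=\{x_1x_2x_3=1\}$ is $M\cong\bZ^3/\bZ(1,1,1)$, on which $\sigma=(123)$ acts with $1+\sigma+\sigma^2=0$ and $\det(\sigma-1)=3$; hence $\rH^1(\langle\sigma\rangle,M)=M/(\sigma-1)M\cong\bZ/3\neq 0$. Since stably permutation modules are coflabby, $M$ is \emph{not} stably permutation, so Proposition~\ref{prop:linear-torus} cannot be applied to this surface. The same computation shows that the sequence $0\to M\to\bZ\{(-1)\text{-curves}\}\to\Pic(X)\to 0$ cannot split over any subgroup containing the $3$-cycles (the middle term restricts to $\bZ[\langle\sigma\rangle]^{\oplus 2}$, which has vanishing $\rH^1$), so the splitting you cite from Section~\ref{subsect:autdP6} cannot carry the argument. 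Your remaining claim is also false: for the relevant $\fS_3$ (permutation of the \emph{factors} $x_i$), the transpositions exchange the two triangles of $(-1)$-curves --- $(12)$ sends $L_1=\{X_3=W_2=0\}$ to $\{X_3=W_1=0\}=E_{13}$ --- and act on $\Pic(X)$ by $H\mapsto 2H-E_1-E_2-E_3$, $E_1\mapsto H-E_1-E_3$, etc., not by permuting the basis $\{H,E_1,E_2,E_3\}$. You have conflated this $\fS_3$ with the other $\fS_3\subset\Aut(X)$ (generated by the $3$-cycles and the transpositions composed with $x\mapsto x^{-1}$), which does preserve the triangles. The module that actually is stably permutation here is $\NS(X)$, not $M$: one can check $\NS(X)\oplus\bZ F\cong\bZ[\fS_3/\langle(12)\rangle]\oplus\bZ[\fS_3/\langle(123)\rangle]$ using the basis $H-E_1+F,\,H-E_2+F,\,H-E_3+F$ together with $H+2F,\;2H-E_1-E_2-E_3+2F$. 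This is why the paper's route (Proposition~\ref{prop:eq-version}, equivalently Corollary~\ref{coro:linearbytorsor}, whose hypotheses are on $\NS(X)$) is viable while yours is not; note that this verification is a real step which your proposal, like the paper's terse proof, leaves unaddressed.

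\emph{The lift in (ii) lifts the wrong $\fS_4$.} Because the transpositions of the proposition's $\fS_3$ exchange $\{L_i\}$ with $\{E_{jk}\}$, any lift of them to $\Cox(X)$ must exchange $\lambda$-variables with $\eta$-variables; ``permuting indices of the generators'' is not a lift of this $\fS_3$ at all --- it descends to $(x_1,x_2,x_3)\mapsto(x_2^{-1},x_1^{-1},x_3^{-1})$, etc. Your corrected sign changes together with the index permutations do assemble into an honest $\fS_4$-action on $\bA^6$ (and the idea of correcting naive lifts by a $2$-torsion element of $T_{\NS(X)}$ is sound, though the naive $\eta$-lifts are in fact $\fS_3$-equivariant for index permutations and fail only the $V_4$-multiplicativity, not both relations). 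But that assembled action lifts the \emph{other} $\fS_4\subset\Aut(X)$, the one preserving the triangle $\{L_1,L_2,L_3\}$; that action blows down equivariantly to the projectivized standard representation on $\bP^2$, so it is linearizable outright and proving it stably linearizable says nothing about the proposition. A correct lift of the minimal action does exist, but it must combine the mixed permutations (e.g.\ $(12)\colon\lambda_1\leftrightarrow\eta_{13},\ \lambda_2\leftrightarrow\eta_{23},\ \lambda_3\leftrightarrow\eta_{12}$) with translation lifts negating \emph{four} variables, e.g.\ $\iota_1\mapsto$ negation of $\{\lambda_1,\lambda_2,\eta_{12},\eta_{13}\}$ and $\iota_2\mapsto$ negation of $\{\lambda_2,\lambda_3,\eta_{12},\eta_{23}\}$; one checks these descend correctly and satisfy all $\fS_4$-relations. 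In fairness, the paper's own printed formulas ($\iota_1$ negating only $\lambda_2$, $\iota_2$ only $\lambda_3$) violate the conjugation relations for every choice of $\fS_3$-lift, so your instinct that the naive sign changes need repair is correct; but the repair must be carried out for the right group, and the module-theoretic input must be placed on $\NS(X)$ rather than on $M$.
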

\begin{proof}
We will apply Proposition~\ref{prop:eq-version}, the equivariant version of Proposition~\ref{prop:stabratcrit}.

We use the split sequence 
$$1 \rightarrow \mathfrak C_2 \times \mathfrak C_2 \rightarrow \fS_4 \rightarrow \fS_3 \rightarrow 1$$
induced by (\ref{eqn:aut-tor}) on the $2$-torsion of $T$. 

First, note the action of $G$ on $T_{\NS(X)}$ -- which factors through the homomorphism
$\fS_4 \rightarrow \fS_3$ -- is stably linearizable. 

It suffices then to lift the $G$-action to the Cox ring. The action of $\fS_3$ is clear by the
indexing of our variables.  For the involutions $\iota_1$ and $\iota_2$, we take
$$
\iota_1(\lambda_2)=-\lambda_2
$$
and
$$
\iota_2(\lambda_3)=-\lambda_3,
$$
with trivial action on the remaining variables. The gives the desired lifting.
\end{proof}

\

There is also an action of $G=\fS_3\times \fS_2$ on $X$, with $G\cap T(k)=1$, that is not linearizable, but is stably linearizable. We discuss it in Section~\ref{sect:wg2}.

\section{Weyl group of $\mathsf G_2$ actions}
\label{sect:wg2}
We start with an example presented in \cite[\S~9]{lemire} and motivated by the following question: is the Weyl group action on a maximal torus in a Lie group
equivariantly birational to the induced action on
the Lie algebra of the torus? The authors study 
the action of 
$$
G:=W(\mathsf G_2) \simeq \fS_3\times \fS_2,
$$
the Weyl group of the exceptional Lie group $\mathsf G_2$: 
Consider the torus
$$T= \{(x_1,x_2,x_3): x_1x_2x_3=1 \}$$
and its Lie algebra
$$\ft = \{(y_1,y_2,y_3): y_1+y_2+y_3=0 \},$$
with $\fS_3$ acting on both varieties by permuting the coordinates, 
and $\fS_2 := \left< \epsilon \right>$ acting via
$$
\epsilon \cdot (x_1,x_2,x_3) = (x_1^{-1},x_2^{-1},x_3^{-1})
$$
and 
$$
\epsilon \cdot (y_1,y_2,y_3) = (-y_1,-y_2,-y_3).
$$
We now describe good projective models of both varieties, i.e., such that
the complement of the free locus is normal crossings so that all
stabilizers are abelian.

\subsection{Multiplicative action}
This case builds on section~\ref{subsect:autdP6}; we retain the notation introduced there.

While the sextic del Pezzo surface is a fine model for our group action, it is
often most natural to blow up to eliminate points with nonabelian stabilizers
cf.~\cite[\S 2]{BnG}.
Let $S_{(1,1,1)}$ denote the blowup at $(1,1,1)$.
We identify distinguished loci in $S_{(1,1,1)}$ as proper transforms
of loci in the sextic del Pezzo surface. In addition to the six curves
listed above, we have
\begin{itemize}
\item{$D_i$ from $\{(X_i-W_i)(-1)^{i+1}=0 \}$ for $i=1,2,3$;}
\item{$E$ exceptional divisor over $(1,1,1)$.}
\end{itemize}
The nonzero intersections are
$$E_{12}L_1=E_{12}L_2=E_{23}L_2=E_{23}L_3=E_{13}L_3=E_{13}L_1=1$$
and 
$$D_1L_1=D_1E_{23}=D_1E=1, 
$$
$$
D_2L_2=D_2E_{13}=D_2E=1,
$$
$$
D_3L_3=D_3E_{12}=D_3E=1.$$
All self-intersections are $-1$.  

To compute the Cox ring, we introduce new variables $\delta_i$ and $\eta$ associated with
$D_i$ and $E$. The resulting relations are
\begin{align*}
\delta_1\eta &=X_1-W_1 = \lambda_2\eta_{12} - \lambda_3 \eta_{13}, \\
\delta_2 \eta &=-X_2+W_2=-\lambda_3\eta_{23} + \lambda_1 \eta_{12}, \\
\delta_3 \eta &=X_3-W_3=\lambda_1 \eta_{13} - \lambda_2\eta_{23}.
\end{align*}
Reassigning
$$\lambda_i=p_{i4}, \eta_{ij}=p_{k5}, \delta_i=p_{jk}, \eta=p_{45}$$
we obtain three Pl\"ucker relations.  The remaining relations
$$p_{12}p_{34}-p_{13}p_{24}+p_{14}p_{23}=p_{12}p_{35}-p_{13}p_{25}+p_{15}p_{23}=0$$
are also valid.  

The group $\fS_3\times \fS_2$ may be interpreted as permutations of the
sets $\{1,2,3\}$ and $\{4,5\}$. In the natural induced action, 
$$(ij)\cdot p_{ij} = -p_{ij}, \quad \epsilon\cdot p_{45} = -p_{45}$$
but the actions on the original six variables are compatible.

The elements
$$(\zeta,\zeta,\zeta), \,\,(\zeta^2,\zeta^2,\zeta^2)  \in T, \quad
\zeta = e^{2 \pi i/3},
$$ 
are fixed by $\fS_3$.  
The curves in the sextic del Pezzo surface
$$
F_{12}=\{X_1W_2-W_1X_2=0\}, 
$$
$$
F_{13}=\{X_1W_3 - W_1 X_3=0 \},
$$
$$
F_{23}=\{X_2W_3-W_2X_3=0\}$$
meet at the three diagonal points and have intersections
$$F_{12}^2=F_{13}^2=F_{23}^2=2, \quad
 F_{12}F_{13}=F_{12}F_{23}=F_{13}F_{23}=3.$$
Let $S_{\times} \rightarrow S_{(1,1,1)}$ denote the blowup at these points,
a cubic surface.

Iskovskikh \cite{isk-s3} presents an equivariant birational morphism
$$S_{(1,1,1)} \rightarrow Q=\{3\hat{w}^2=xy+xz+yz\} \subset \bP^3$$
obtained by double projection of the sextic del Pezzo from 
$(1,1,1)$. This blows down the proper transforms of $D_1,D_2,$ and $D_3$.
Here $\fS_3$ acts by permutation of $\{x,y,z\}$ and $\epsilon \cdot w= -w$.
Indeed, the proper transforms of $L_1,L_2,L_3$ are in one ruling;
the proper transforms of $E_{23},E_{13},E_{12}$ are in the other ruling.

This can be obtained as follows: Choose a basis for the forms
vanishing to order two at $(1,1,1)$:
\begin{align*}
x &= (X_1+W_1)(X_2-W_2)(X_3-W_3) \\
y &= (X_1-W_1)(X_2+W_2)(X_3-W_3) \\
z &= (X_1-W_1)(X_2-W_2)(X_3+W_3) \\
w &= (X_1-W_1)(X_2-W_2)(X_3-W_3) 
\end{align*}
so we have
$$xy+xz+yz=w(2(X_1X_2X_3-W_1W_2W_3)+w)\equiv w^2.$$
We use (\ref{eqn:dp6}) to get the last equivalence on our degree-six
del Pezzo surface.

\subsection{Additive action}
We turn to the action on the Lie algebra:
The representation of $\ft$ is linear and admits a compactification
$$
\ft \subset \bP(\ft \oplus k).
$$
Write $y_1=Y_1/Z$ and $y_2=Y_2/Z$ 
so that the induced action on $\bP^2$ has fixed point $[0,0,1]$ 
and distinguished loci
$$A_{12}=\{Y_1=Y_2\}, \quad A_{13}= \{Y_1=-Y_1-Y_2 \}, \quad A_{23} = \{Y_2=-Y_1-Y_2\}$$
and 
$$B_{12}=\{Y_2=-Y_1\}, \quad B_{13}= \{Y_2=0\}, \quad B_{23}=\{Y_1=0\}.$$
Blowing up the origin $Y_1=Y_2=0$ yields a smooth projective surface
$\simeq \mathbb{F}_1$ with abelian stabilizers.

The Cox ring is given by
$$
k[\zeta, \beta_{13},\beta_{23}, \eta],
$$
with $Z=\zeta$, $Y_1=\eta \beta_{23}$, $Y_2=\eta \beta_{13}$. 
One lift of the $\fS_3\times \fS_2$-action has $\fS_3$ acting with the
standard two-dimension representation on $\beta_{13},\beta_{23}$
and $\fS_2$-action via $\epsilon\cdot \eta=-\eta$.  
The two-dimensional torus acts via 
$$(\eta,\beta_{13},\beta_{23},\zeta) \mapsto 
(t_E \eta, t_f \beta_{13}, t_f \beta_{23}, t_E t_f \zeta).$$

\subsection{On the Lemire-Reichstein-Popov stable equivalence  \cite{lemire}}
\label{sect:lem-reich}
Consider the rational map
$$
\begin{array}{rcl}
\ft &\dashrightarrow& \bP(\ft) \\
(Y,Z) & \mapsto & [Y,Z]. 
\end{array}
$$
Taking Cartesian products, we obtain
$$
\begin{array}{rcl}
\ft \times \ft \simeq \bA^4  &\dashrightarrow& \bP(\ft) \times \bP(\ft) \\
(Y_1,Z_1,Y_2,Z_2) & \mapsto & ([Y_1,Z_1],[Y_2,Z_2]). 
\end{array}
$$
This induces a rank-two vector bundle
$$\Bl_{\{Y_1=Z_1=0\}\cup \{Y_2=Z_2=0\}}(\bA^4) \rightarrow {\bP(\ft)}^2.$$

We take the product as an $\fS_3\times \fS_2$-variety, where the first factor
acts diagonally and the second factor interchanges the two factors.  
Thus ${\bP(\ft)}^2\simeq Q$ as $\fS_3\times \fS_2$-varieties. 

On the other hand, there is a morphism 
$$
\begin{array}{rcl}
\bA^4 & \rightarrow & \ft \\
(Y_1,Z_1,Y_2,Z_2) & \mapsto & (Y_1-Y_2,Z_1-Z_2)
\end{array}
$$
which is also a rank-two vector bundle over $\ft$.  

Applying the No-Name Lemma twice, we conclude that 
$\ft \times \bA^2$ and $T\times \bA^2$ -- with trivial actions on the $\bA^2$ factors
-- are $G$-equivariantly birational to each other. 

\

\noindent
{\bf Question:} Is the affine quadric threefold
$$ w^2 = xy+xy+yz $$
$G$-equivariantly birational equivalent to $\ft \times \bA^1$?

\section{Quadric surfaces}
\label{sect:quad}

We are now in a position to settle the stable linearizability problem for quadric surfaces 
$$
X=\bP^1\times \bP^1,
$$
completing the results in \cite[Thm. 3.25]{sarikyan}, which identifies linearizable actions. 

Let $G$ act generically freely and minimally on $\bP^1 \times \bP^1$. In particular, there
exist elements exchanging the two factors. Let $G_0$ be the intersection of $G$ with the
identity component of 
$$\Aut(\bP^1)^2 \subset \Aut(\bP^1 \times \bP^1),$$ 
so we have an exact sequence
$$1 \rightarrow G_0 \rightarrow G \rightarrow \fS_2 \rightarrow 1.$$
Each element 
$\iota \in G \setminus G_0$ acts via conjugation $G_0$.  
Let $D$ denote the intersection of $G_0$ with the diagonal subgroup and
$A_i$ the image of $G_0$ under the projection $\pi_i$. Conjugation by $\iota$
takes the kernel of $G_0\rightarrow A_1$ to the kernel of $G_0 \rightarrow A_2$
and thus induces an isomorphism
$$\phi_{\iota}:A_1 \stackrel{\sim}{\rightarrow} A_2$$
restricting to the identity on $D$.  

Sarikyan shows that $G$ is linearizable if and only if $A\simeq \mathfrak C_n$, the cyclic group \cite[Lemma 3.24]{sarikyan}.  
Moreover, 
\begin{itemize}
\item{the only linearizable actions of $A$ on $\bP^1$ are by 
$\mathfrak C_n$ or $\mathfrak D_n$, the dihedral group of order $2n$, with $n>1$ odd;}
\item{the remaining group actions on $\bP^1$ cannot be linearized due to the
Amitsur obstruction.}
\end{itemize}
Thus the only possible candidate for {\em stably} linearizable but nonlinearizable
actions on $\bP^1\times \bP^1$ are when $A\simeq \mathfrak D_n$, $n>1$ odd.  

\begin{prop}
\label{prop:quads}
Under the assumptions above, $G$-actions on $\bP^1\times \bP^1$ with $A\simeq \mathfrak D_n$, with $n>1$ odd, are
always stably linearizable. 
\end{prop}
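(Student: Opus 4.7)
The plan is to invoke Corollary~\ref{coro:linearbytorsor}: check that $\NS(X)$ is stably permutation as a $G$-module and that $X$ carries a $G$-equivariant universal torsor $\cP$ with stably linear $G$-action. The first is immediate since $\NS(\bP^1 \times \bP^1) \simeq \bZ^2$, $G_0$ preserves each ruling, and $\iota \in G \setminus G_0$ swaps the two rulings; hence $\NS(X)$ is the permutation module $\bZ[G/G_0]$.

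For the torsor, I exploit the toric structure: $\Cox(X) = k[x_0,x_1,y_0,y_1]$ and the universal torsor embeds as the open subset $\cP = (\bA^2 \setminus 0) \times (\bA^2 \setminus 0) \subset \bA^4$. If the $G$-action on $X$ lifts to a linear action on $\bA^4$, then $\cP$ is $G$-birational to $\bA^4$ with linear action, hence stably linear. The obstruction to such a lift is the class
$$\alpha = \partial(\operatorname{Id}) \in \rH^2(G, T_{\NS(X)})$$
of Section~\ref{subsect:cox}, so everything reduces to showing $\alpha = 0$.

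The main computation: as a $G$-module $T_{\NS(X)} \simeq \bG_m \oplus \bG_m$ with trivial $G_0$-action and swap by $\iota$, so $T_{\NS(X)} = \mathrm{Ind}_{G_0}^G \bG_m$. Shapiro's lemma then yields an isomorphism
$$\rH^2(G, T_{\NS(X)}) \xrightarrow{\sim} \rH^2(G_0, \bG_m),$$
under which $\alpha$ maps to $\pi_1^*\,\mathrm{Am}(\bP^1, A_1)$, where $\pi_1 : G_0 \to A_1 \simeq \mathfrak{D}_n$ is the first projection and $\mathrm{Am}(\bP^1, A_1) \in \rH^2(A_1, \bG_m)$ is the Amitsur obstruction to lifting $A_1 \subset \PGL_2$ to $\GL_2$. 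Since the Schur multiplier $\rH^2(\mathfrak{D}_n, \bG_m)$ vanishes for $n$ odd, this Amitsur class is zero, and therefore $\alpha = 0$.

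The main obstacle I expect is the Shapiro-lemma identification: one must verify carefully that the image of the obstruction class $\alpha = \partial(\operatorname{Id})$ really is the Amitsur class of $A_1$ on the first $\bP^1$-factor, which requires tracking the induced-module structure through the definition of $\partial$. Once this is in place, the $G$-action on $X$ lifts to a linear action on $\bA^4$, the universal torsor $\cP \subset \bA^4$ is stably linear, and Corollary~\ref{coro:linearbytorsor} yields the desired conclusion.
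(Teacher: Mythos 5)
Your proposal is correct, but it takes a genuinely different route from the paper's own proof. The paper argues representation-theoretically: since $A_i\simeq \mathfrak D_n$ with $n$ odd acts linearizably on $\bP^1$, one writes $\bP^1\times\bP^1=\bP(V_1)\times\bP(V_2)$ with $V_i$ honest two-dimensional representations of $A_i$, regards $V_1$ as a $G_0$-representation via $G_0\twoheadrightarrow A_1$, forms $\operatorname{Ind}_{G_0}^G(V_1)$, and uses Mackey's formula to identify its restriction to $G_0$ with $V_1\oplus V_2$; then $V_1\times V_2\dashrightarrow \bP(V_1)\times\bP(V_2)$ is a rank-two equivariant vector bundle after blowup, and the No-Name Lemma finishes, with no torsors or cohomology appearing at all. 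You instead run the machinery of Sections~\ref{sect:equi-for}--\ref{sect:torvar}: $\NS(X)=\bZ[G/G_0]$ is permutation, stable linearizability reduces via Corollary~\ref{coro:linearbytorsor} to killing $\alpha=\partial(\operatorname{Id})\in\rH^2(G,T_{\NS(X)})$, and you compute this by Shapiro's lemma together with the vanishing of $\rH^2(\mathfrak D_n,k^\times)$ for $n$ odd. Your flagged ``main obstacle'' does go through: the sequence (\ref{LES}) is natural both in the group (restriction to $G_0$) and in the torus (pushforward along the projection $T_{\NS(X)}\to\bG_m$ dual to the first ruling class), so the Shapiro image of $\alpha$ is the $G_0$-Schur multiplier of $\pi_1^*\cO_{\bP^1}(1)$; since linearization cocycles pull back along the equivariant map $\pi_1$, this class is inflated from $\rH^2(A_1,k^\times)=0$. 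Note that the two proofs rest on the same group-theoretic fact (trivial Schur multiplier of odd dihedral groups), and the linear $\bA^4$ you get from the lifted Cox-ring action \emph{is} the paper's induced representation $\operatorname{Ind}_{G_0}^G(V_1)$ -- Mackey and Shapiro being two faces of the same induction--restriction adjunction. What the paper's route buys: it is elementary, needs no cohomological bookkeeping, and generalizes immediately to $(\bP^m)^r$ (the unnumbered proposition following Proposition~\ref{prop:quads}). What your route buys: it localizes exactly where oddness of $n$ enters, stays entirely inside the universal-torsor formalism the paper is advertising, and makes transparent why the remaining cases fail -- there the obstruction $\alpha$ is nonzero and no equivariant universal torsor exists.
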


\begin{proof}
Suppose that $\bP^1 \times \bP^1 = \bP(V_1) \times \bP(V_2)$, where $V_1$ and $V_2$
are representations of $A_1$ and $A_2$, along with an isomorphism of $D$-representations
$$
V_1|D \stackrel{\sim}{\ra} V_2|D.
$$
Using the quotient $G_0 \twoheadrightarrow A_1$, we can regard $V_1$ as a representation
of $G_0$.  Take the induced representation
$$\operatorname{Ind}_{G_0}^G (V_1)$$
which has dimension four.  Mackey's induced character formula implies
that the restriction of this representation back down to $G_0$ is
of the form
$$V_1 \oplus V_2,$$
where $V_2$ is regarded as a $G_0$ representation via $G_0\twoheadrightarrow A_2$.

Now $V_1 \oplus V_2$, as a variety, is the product $V_1\times V_2$.  
The rational maps $V_i \dashrightarrow \bP(V_i)$ induce 
$$V_1 \times V_2 \dashrightarrow \bP(V_1) \times \bP(V_2),$$
resolved by blowing up $\{0\}\times V_2$ and $V_1 \times \{0\}.$
This has the structure of a rank-two $G$-equivariant vector bundle. 
The No-Name Lemma implies that $V_1\times V_2$ is birational to
$\bA^2 \times \bP(V_1)\times\bP(V_2)$ where the first factor has
trivial $G$-action. Hence the $G$-action on $\bP(V_1)\times \bP(V_2)$ is 
stably linearizable.
\end{proof}

For $G=W(\mathsf G_2)=\fS_2\times \fS_3$ this is precisely the result
of \cite[\S 9]{lemire} presented in Section~\ref{sect:lem-reich}.

\subsection{Generalizations}

The same argument gives:
\begin{prop}
Let $G$ be a finite group acting generically freely
on $(\bP^m)^r$. Write $G_0\subset G$ for
the intersection of $G$ with the identity component of $\Aut((\bP^m)^r)$.
Suppose that 
\begin{itemize}
\item{$G$ acts transitively on the $r$ factors;}
\item{the image $A_i$ of $\pi_i: G_0 \ra \Aut(\bP^m)$, the projection to the $i$-th factor, has a linearizable
action on $\bP^m$.}
\end{itemize}
Then the action of $G$ on $(\bP^m)^r$ is stably linearizable.
\end{prop}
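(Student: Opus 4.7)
The plan is to mirror the proof of Proposition~\ref{prop:quads}: build a linear $G$-representation $W$ of dimension $r(m+1)$ whose restriction to $G_0$ is $V_1 \oplus \cdots \oplus V_r$, where $V_i$ denotes the chosen linearization of the $A_i$-action on the $i$-th factor. Once such a $W$ is in hand, the product of projectivizations
$$
W \dashrightarrow \bP(V_1) \times \cdots \times \bP(V_r) = (\bP^m)^r
$$
is $G$-equivariant. Blowing up the $r$ coordinate subspaces $\{V_j=0\} \subset W$ resolves this map and exhibits $W$ (up to birational modification) as a $G$-equivariant rank-$r$ vector bundle over $(\bP^m)^r$; since the $G$-action on the base is generically free, the No-Name Lemma yields a $G$-birational equivalence $W \sim_G \bA^r \times (\bP^m)^r$ with trivial action on the $\bA^r$-factor, from which stable linearizability follows because $W$ is itself a linear $G$-representation.

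To construct $W$, let $H := \operatorname{Stab}_G(\{1\}) \supseteq G_0$ denote the setwise stabilizer of the first factor, so that $[G:H]=r$ by transitivity. After lifting the given linearization of $A_1$ to an $H$-representation on $V_1$, set $W := \operatorname{Ind}_H^G V_1$, of dimension $r(m+1)$. Since $G_0$ is normal in $G$ and contained in every conjugate of $H$ (each conjugate being the stabilizer of a factor), Mackey's formula reduces cleanly to
$$
W\big|_{G_0} \;=\; \bigoplus_{g \in G_0\backslash G / H} {}^g V_1.
$$
A direct computation identifies each summand ${}^g V_1$ with $V_{\sigma_g(1)}$ via the conjugation isomorphism $A_1 \simeq A_{\sigma_g(1)}$ induced by $g$, where $\sigma_g \in \fS_r$ is the permutation of factors induced by $g$; since $g \mapsto \sigma_g(1)$ is a bijection $G_0\backslash G / H \to \{1,\ldots,r\}$, we obtain $W|_{G_0} = V_1 \oplus \cdots \oplus V_r$ as required.

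The main technical hurdle is the initial lifting step: extending $V_1$ from an $A_1$-representation to an $H$-representation, where the image $B_1$ of $H$ in $\Aut(\bP^m)$ may properly contain $A_1$. When $G/G_0 \hookrightarrow \fS_r$ acts simply transitively on the $r$ factors (as in Proposition~\ref{prop:quads}, where $r=2$ and hence $H=G_0$), no extension is needed and the argument proceeds verbatim. In the general transitive case the obstruction is the Schur class in $\rH^2(H, \bG_m)$ attached to the $H$-action on $\bP^m$, whose restriction to $A_1$ vanishes by assumption; once this class is killed -- either outright, or after a stable twist of $V_1$ that does not alter the projectivization up to $G$-birational modifications with trivial affine factors -- the remainder of the proof is the direct generalization of the $r=2$ argument.
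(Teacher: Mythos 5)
Your first two paragraphs faithfully generalize the paper's intended argument --- the paper's entire proof is the phrase ``the same argument gives,'' referring to Proposition~\ref{prop:quads} --- and you are right that the crux is the step you isolate at the end: one must induce from the full factor-stabilizer $H$ (not $G_0$, which would produce a representation of the wrong dimension when $[G:G_0]>r$), and for that one needs a linear $H$-representation $V_1$ whose projectivization recovers the action of the image $B_1$ of $H$ on the first factor. When $G/G_0$ acts \emph{simply} transitively on the factors, so that $H=G_0$ and $B_1=A_1$ (automatic for $r=2$, the case of Proposition~\ref{prop:quads}), your induction-plus-Mackey-plus-No-Name argument is complete and is exactly the paper's.

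Your final paragraph, however, does not close the general case, and cannot: the obstruction in $\rH^2(H,\bG_m)$ is the Schur class of the $B_1$-action on $\bP^m$ pulled back to $H$; it is a stable birational invariant, so no ``stable twist'' kills it, and the stated hypotheses do not force it to vanish. Concretely, take $m=1$, $r=3$, and let $G=\langle a,\sigma,\tau\rangle\subset\Aut((\bP^1)^3)$ with $a(t_1,t_2,t_3)=(t_1^{-1},t_2^{-1},t_3^{-1})$, $\sigma(t_1,t_2,t_3)=(t_2,t_3,t_1)$, $\tau(t_1,t_2,t_3)=(-t_1,-t_3,-t_2)$. Then $G\simeq \mathfrak C_2\times\fS_3$ acts generically freely and transitively on the factors, $G_0=\langle a\rangle$, and each $A_i\simeq\mathfrak C_2$ acts linearizably on $\bP^1$, so all hypotheses of the Proposition hold. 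Yet $H=\langle a,\tau\rangle\simeq\mathfrak C_2\times\mathfrak C_2$ acts on the first factor through the Klein group $\langle\, t\mapsto t^{-1},\ t\mapsto -t\,\rangle\subset\PGL_2$ of Section~\ref{sect:p1}, which does not lift to $\GL_2$; since $\Gamma((\bP^1)^3,\pi_1^*\cO(1))=\pi_1^*\Gamma(\bP^1,\cO(1))$ is two-dimensional, an $H$-linearization of $\pi_1^*\cO(1)$ would produce such a lift, so $\mathrm{Am}((\bP^1)^3,H)\neq 0$. Because stable linearizability passes to subgroups and the Amitsur group is a stable birational invariant that vanishes for projectivized linear actions, the $G$-action in this example is \emph{not} stably linearizable. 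So the gap you flagged is genuine and unfixable as stated: both your proof and the Proposition itself require the stronger hypothesis that each factor-stabilizer $H_i$ --- not merely $G_0$ --- has linearizable image-action on $\bP^m$ (equivalently, that your Schur classes vanish); under that hypothesis your argument goes through verbatim, and it covers the paper's application to $r=2$.
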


\begin{prop}
Let $G$ be a finite group. Let $G$ act generically freely on smooth projective varieties $X_1$ and $X_2$ with $\Pic(X_i)=\NS(X_i)$. Suppose there
exist universal torsors $\cP_i \ra X_i$ with compatible $G$ actions.  
Then 
$$\cU:=\pi_1^*\,\cP_1 \times_{X_1\times X_2} \pi_2^*\,\cP_2 \rightarrow X_1\times X_2$$
is a universal torsor as well. 

If $\NS(X_1)\oplus \NS(X_2)$ is a stably permutation module then
$X_1\times X_2$ is stably birational to $\cU$.  

Moreover, if the $X_i$ are $T_i$-toric varieties then 
$X_1 \times X_2$ is stably linearizable.  
\end{prop}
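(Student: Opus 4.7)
The plan is to verify the three assertions in order. First, for the universality of $\cU$, I would note that $\pi_{1}^{*}\cP_{1}=\cP_{1}\times X_{2}$ and $\pi_{2}^{*}\cP_{2}=X_{1}\times \cP_{2}$ over $X_{1}\times X_{2}$, so the fiber product collapses to $\cU=\cP_{1}\times\cP_{2}$ with the diagonal $G$-action. The hypothesis $\Pic(X_i)=\NS(X_i)$ forces $\Pic^{0}(X_i)=0$, whence the K\"unneth-type decomposition
$$
\Pic(X_{1}\times X_{2}) = \pi_{1}^{*}\Pic(X_{1})\oplus \pi_{2}^{*}\Pic(X_{2})
$$
holds as $G$-modules. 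Dualizing yields $T_{\NS(X_{1}\times X_{2})} \simeq T_{\NS(X_{1})}\times T_{\NS(X_{2})}$, and the product torsor structure on $\cP_{1}\times\cP_{2}$ is a torsor for this product torus whose classifying homomorphism $\hat{T}\to \Pic(X_{1}\times X_{2})$ is the identity on each summand -- so $\cU$ is a $G$-equivariant universal torsor.

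Next, under the stably permutation hypothesis on $\NS(X_{1})\oplus\NS(X_{2})=\NS(X_{1}\times X_{2})$, I would apply Proposition~\ref{prop:split} directly to $\cU\to X_{1}\times X_{2}$ to obtain a $G$-equivariant rational section and hence
$$
\cU \sim_{G} T_{\NS(X_{1}\times X_{2})}\times (X_{1}\times X_{2}).
$$
Since the $\bZ$-dual of a permutation module $\bZ[G/H]$ is again $\bZ[G/H]$, the cocharacter lattice of $T_{\NS(X_{1}\times X_{2})}$ is also stably permutation, and Proposition~\ref{prop:toric:mor} gives that the $G$-action on this torus is stably linearizable. A No-Name Lemma argument (as in the proof of Proposition~\ref{prop:linear-torus}) then upgrades the displayed equivalence to a stable $G$-birational equivalence between $\cU$ and $X_{1}\times X_{2}$.

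Finally, for the toric case, I would invoke the exact sequence (\ref{eq:SPM}) -- combined with the standing assumption that the cocharacter lattices $N_i$ of the $T_i$ are stably permutation -- to conclude that $\Pic(X_{i})$ is stably permutation, placing us within the hypothesis of the second step. Moreover, by Section~\ref{sect:cox-toric}, each $\cP_{i}$ embeds $G$-equivariantly as an open subset of $\bA^{\Sigma_{i}(1)}$ on which $G$ acts by (a torus twist of) the coordinate permutation; hence $\cU=\cP_{1}\times\cP_{2}$ is a $G$-stable open subset of $\bA^{\Sigma_{1}(1)\sqcup\Sigma_{2}(1)}$, a linear $G$-representation. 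Thus $\cU$ is linearizable, and combining with the second step gives stable linearizability of $X_{1}\times X_{2}$. The main obstacle I anticipate is the K\"unneth-type identification of $\Pic(X_{1}\times X_{2})$ as a $G$-module together with careful bookkeeping of the torsor structure under fiber product, and -- in the toric case -- verifying that the given $G$-action on each $\cP_{i}$ is compatible with the coordinate permutation action on the ambient affine space, which is built into the hypothesis of $G$-equivariance but requires attention when $X_i$ has no $G$-fixed point.
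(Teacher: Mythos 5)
Your proof is correct and is essentially the argument the paper intends: the proposition is stated there without proof, and the pieces you assemble --- the product-torsor construction (which already appears in the paper's proof that existence of a $G$-equivariant universal torsor is a stable $G$-birational property), Proposition~\ref{prop:split} together with a No-Name argument for the second claim, and the Cox-ring embedding argument of Proposition~\ref{prop:linear-torus} for the toric case --- are exactly the paper's own tools. The only point to adjust is your reading of the final clause: the standing hypothesis is that $\NS(X_1)\oplus\NS(X_2)$ is stably permutation (carried over from the second claim), not that each cocharacter lattice $N_i$ is stably permutation; via (\ref{eq:SPM}) your assumption is slightly stronger than needed and is in fact redundant, since your last step only uses the second claim's conclusion together with linearizability of the open embedding $\cU\subset\bA^{\Sigma_1(1)\sqcup\Sigma_2(1)}$.
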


\section{Quotients of flag varieties by tori}
\label{sect:quotor}

\subsection{Weyl group actions on Grassmannians}
Consider the Grassmannian $\Gr(m,n)$ of $m$-dimensional subspaces of an $n$-dimensional vector space.  
Once we fix a basis for the underlying vector space, the symmetric group $\mathfrak{S}_n$ acts naturally on
$\Gr(m,n)$.  

Every element of $\Gr(m,n)$ may be interpreted as the span of the rows of an $m\times n$ matrix $A$
with full rank.  Let $\bA^{mn}$ denote the affine space parametrizing these and $U \subset \bA^{mn}$
the open subset satisfying the rank condition. Then
$$
\Gr(m,n) = \GL_m \backslash U,
$$
where the linear group acts via multiplication from the left.  
Let 
$$
\mathcal S \rightarrow \Gr(m,n)
$$ 
denote the universal subbundle of rank $m$,
$\End(\mathcal S)=\mathcal S^*\otimes \mathcal S$, and $\GL(\mathcal S) \subset \End(\mathcal S)$ the associated frame/principal $\GL_m$ bundle.
We write the induced $\GL_m$-action on $\GL(\mathcal S)$ from the left.    
Note that 
$$\dim \GL(\mathcal S) = \dim \Gr(m,n) + \rank(\mathcal S)^2 = m(n-m)+m^2;$$
indeed, we may identify $\GL(\mathcal S)$ with $U$, equivariantly with respect to the natural left 
$\GL_m$ actions.  

Returning to the $\mathfrak{S}_n$-action: It acts on the $m\times n$ matrices by permuting the
columns, which commutes with the $\GL_m$-action given above.  In particular the action is linear on $\bA^{mn}$. 
This action coincides with the natural induced action on $S$, $\End(\mathcal S)$, and $\GL(\mathcal S)$.  The No-Name Lemma says that
the $\fS_n$-action on $\End(\mathcal S)$ -- regarded as a vector bundle over $\Gr(m,n)$ -- is equivalent to the action on 
$\bA^{m^2} \times \Gr(m,n)$ with trivial action on the first factor. 
We conclude:

\begin{prop}
\label{prop:st-grass}
The action of $\mathfrak{S}_n$
on $\Gr(m,n)$ is stably linearizable.  
\end{prop}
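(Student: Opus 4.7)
The plan is to string together the two $\fS_n$-equivariant descriptions of $U$ furnished in the paragraph above and then invoke the No-Name Lemma. Concretely, we have at our disposal two presentations of the same $\fS_n$-variety:
\begin{enumerate}
\item $U \subset \bA^{mn}$ is a Zariski open subset, with $\fS_n$ acting on $\bA^{mn}$ linearly by permutation of columns;
\item $U \simeq \GL(\mathcal S) \subset \End(\mathcal S)$ is a Zariski open subset of the total space of the rank-$m^2$ vector bundle $\End(\mathcal S) \to \Gr(m,n)$, compatibly with the natural $\fS_n$-actions on both sides.
\end{enumerate}
Both identifications are $\fS_n$-equivariant because they descend from, respectively, the linear $\fS_n$-action on $\bA^{mn}$ and its compatibility with the left $\GL_m$-action used to form the quotient $\Gr(m,n) = \GL_m \backslash U$.

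With these in hand, the proof reduces to a short chain of $\fS_n$-birational equivalences. First, the open inclusions in (1) and (2) give
\[
\bA^{mn} \;\sim_{\fS_n}\; U \;\sim_{\fS_n}\; \End(\mathcal S).
\]
Next, as indicated in the last sentence of the preamble, the No-Name Lemma applied to the $\fS_n$-equivariant vector bundle $\End(\mathcal S) \to \Gr(m,n)$ yields
\[
\End(\mathcal S) \;\sim_{\fS_n}\; \bA^{m^2} \times \Gr(m,n),
\]
with trivial $\fS_n$-action on the first factor. Combining these, we obtain
\[
\bA^{mn} \;\sim_{\fS_n}\; \bA^{m^2} \times \Gr(m,n),
\]
where the left-hand side is a linear $\fS_n$-representation and the right-hand side has trivial action on $\bA^{m^2}$. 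This is exactly the definition of stable linearizability for the $\fS_n$-action on $\Gr(m,n)$.

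The only nontrivial point to verify carefully is that the isomorphism $U \simeq \GL(\mathcal S)$ is $\fS_n$-equivariant with respect to the two a priori different actions (permutation of matrix columns on one side, the induced permutation action on the frame bundle of $\mathcal S$ on the other). This is where one must be slightly careful, but it is essentially tautological: both $\mathcal S$ and the $\GL_m$-quotient description come from the same linear action on $\bA^{mn}$, so the identification is $(\GL_m \times \fS_n)$-equivariant. Granting this compatibility, every other step is a direct application of results already recorded in the paper.
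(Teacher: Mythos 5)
Your proposal is correct and follows essentially the same route as the paper: identifying $U\subset\bA^{mn}$ (linear permutation action on columns) with the frame bundle $\GL(\mathcal S)\subset\End(\mathcal S)$ compatibly with the $\fS_n$- and $\GL_m$-actions, then applying the No-Name Lemma to $\End(\mathcal S)\to\Gr(m,n)$ to conclude $\bA^{mn}\sim_{\fS_n}\bA^{m^2}\times\Gr(m,n)$. The compatibility you flag as the one delicate point is exactly the assertion the paper records ("this action coincides with the natural induced action on $\mathcal S$, $\End(\mathcal S)$, and $\GL(\mathcal S)$"), so no gap remains.
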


\subsection{Del Pezzo surface of degree 5}

It is well-known that a del Pezzo surface of degree 5 can be viewed 
as the moduli space $\overline{\cM}_{0,5}$ of 5 points on $\bP^1$ and thus carries a  natural action of $\fA_5$, induced from the action of $\fS_5$ on the points (see, e.g.,  \cite[Section 1]{sarikyan}). It is also known that this $\fA_5$-action is not linearizable (see e.g., \cite{bannai} or \cite[Theorem 6.6.1]{CS}). 
Again, this should be contrasted with the situation over nonclosed fields, where {\em all} degree 5 del Pezzo surfaces are rational. 

Consider a three-dimensional irreducible faithful representation
$$\varrho: \fA_5 \rightarrow \GL(V).$$
There are two such representations, which are dual to each other.
This gives rise to a generically free (linear!) action of $\fA_5$ on
$\bP^2$. The two linear actions on $\bP^2$ are not conjugated in $\PGL_3$, but {\em are} equivariantly birational \cite[Remark 6.3.9]{CS}.

As an application of Proposition~\ref{prop:st-grass}, we obtain:

\begin{prop}
\label{prop:a5}
The $\fA_5$-actions on $\bP^2$ and $\overline{\cM}_{0,5}$ are not birational but stably birational.
\end{prop}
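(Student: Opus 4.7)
The non-birationality is classical: it is established by distinct methods in \cite{bannai} and \cite[Theorem 6.6.1]{CS}. I will focus on the stable birationality. It suffices to prove that $\overline{\cM}_{0,5}$ is stably $\fA_5$-linearizable, because $\bP^2$ with the linear action is trivially stably linearizable and any two stably linearizable faithful $\fA_5$-actions are stably equivariantly birational (after passing to a large enough common affine space, via the No-Name Lemma). Thus the target is to show that $\overline{\cM}_{0,5}$ is stably $\fA_5$-birational to a linear representation.

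The central idea is to invoke Proposition~\ref{prop:st-grass} with $(m,n)=(2,5)$ through the classical identification (Batyrev--Popov, Serganova--Skorobogatov) of the Cox ring of a del Pezzo surface of degree five with the Pl\"ucker homogeneous coordinate ring of $\Gr(2,5)\subset\bP^9$: the ten $(-1)$-curves of $\overline{\cM}_{0,5}$ correspond bijectively to the ten Pl\"ucker coordinates $p_{ij}$, and the five Cox relations are precisely the Pl\"ucker relations. This identification is $\fS_5$-equivariant, realizing the universal torsor $\cP\to\overline{\cM}_{0,5}$ as an open subset of the affine cone $\widetilde{\Gr}(2,5)\subset\bA^{10}$, with $\fA_5$ acting linearly through the representation $\Lambda^2 k^5$.

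By Proposition~\ref{prop:st-grass}, $\Gr(2,5)$ is stably $\fS_5$-linearizable, hence stably $\fA_5$-linearizable. The punctured cone $\widetilde{\Gr}(2,5)\setminus\{0\}$ is a $\bG_m$-torsor over $\Gr(2,5)$ whose class is $\cO(1)$; the $\fA_5$-linearization of $\cO(1)$ coming from $\Lambda^2 k^5$ makes the Amitsur obstruction vanish, and equivariant Hilbert~90 (applied to the generically free $\fA_5$-action on $\Gr(2,5)$) yields an $\fA_5$-equivariant rational section of this $\bG_m$-torsor. Consequently $\cP\sim_{\fA_5}\Gr(2,5)\times\bA^1$ with trivial action on the second factor, and combining with Proposition~\ref{prop:st-grass} we see that $\cP$ is stably $\fA_5$-linearizable.

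The remaining step, and the main obstacle, is the descent from $\cP$ to $\overline{\cM}_{0,5}$ via the equivariant torsor criterion of Proposition~\ref{prop:eq-version}. A direct cohomological computation shows that $\NS(\overline{\cM}_{0,5})\cong\bZ K\oplus V$ as $\fA_5$-modules, where $V$ is the restriction of the standard $\fS_5$-representation; because $H^1(\bZ/5,V)=\bZ/5$, this lattice is not stably permutation over $\fA_5$, and $T_{\NS(\overline{\cM}_{0,5})}$ is not directly stably $\fA_5$-linearizable. The plan is to sidestep this obstruction by replacing the universal torsor with a non-universal torsor for the quasi-trivial torus $T_M=(\bG_m)^{10}$ indexed by the ten $(-1)$-curves (a genuine permutation $\fA_5$-module): the associated torsor $\cP_\alpha$ is again (an open subset of) the affine cone $\widetilde{\Gr}(2,5)$, but now with $T_M$ stably $\fA_5$-linearizable by Proposition~\ref{prop:toric:mor}. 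The analogue of Proposition~\ref{prop:split} for this quasi-trivial intermediary supplies the $\fA_5$-equivariant rational section, and the hypotheses of Proposition~\ref{prop:eq-version} (suitably adapted to non-universal torsors) are then satisfied, yielding the stable $\fA_5$-linearizability of $\overline{\cM}_{0,5}$ and completing the proof.
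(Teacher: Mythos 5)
Your first three paragraphs follow the paper's route exactly: reduce to stable linearizability of $\overline{\cM}_{0,5}$, realize the universal torsor as an open subset of the affine cone over $\Gr(2,5)$ via the Cox ring/Pl\"ucker identification, and use Proposition~\ref{prop:st-grass} to see that the torsor carries a stably linearizable action. The proposal breaks down in the last paragraph, at the descent step, because of a false module-theoretic claim. You assert that $\NS(\overline{\cM}_{0,5})\cong \bZ K\oplus V$ as $\fA_5$-lattices, with $V$ the standard representation lattice, and deduce from $\rH^1(\bZ/5,V)=\bZ/5$ that the N\'eron--Severi lattice is not stably permutation over $\fA_5$. That splitting holds only after tensoring with $\bQ$: since $K^2=5$ and $K^\perp$ is the $A_4$ root lattice (discriminant $5$), the sublattice $\bZ K\oplus K^\perp$ has index $5$ in the unimodular lattice $\NS(\overline{\cM}_{0,5})$, and this index-$5$ gluing is exactly what kills the cohomology. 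In fact $\NS(\overline{\cM}_{0,5})$ \emph{is} stably permutation, even as an $\fS_5$-module; this is the heart of the paper's proof. Writing $L_5=L$ and $L_i=2L-\sum_{j\neq i,\, j\leq 4}E_j$, the classes $L_1,\dots,L_5$ are permuted by $\fS_5$, and adding a trivial summand $\bZ F_1\oplus \bZ F_2$ one obtains an integral direct-sum decomposition of $\NS(\overline{\cM}_{0,5})\oplus\bZ^2$ into the permutation module spanned by the $L_i-F_1-F_2$ and the trivial rank-two module spanned by $-K-F_1-2F_2$ and $-K-2F_1-F_2$. In particular $\rH^1(C_5,\NS(\overline{\cM}_{0,5}))=0$ (restricting $\bZ[\fS_5/\fS_4]$ to a $5$-cycle gives $\bZ[C_5]$), so there is no obstruction and nothing to sidestep; this is the equivariant shadow of the classical fact that degree-$5$ del Pezzo surfaces are rational over every field.

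Because the premise is false, the workaround built on it is unnecessary, and as written it also does not work. Pushing the universal torsor forward along $T_{\NS(X)}\hookrightarrow \bG_m^{10}$ produces a torsor of dimension $2+10=12$, so it cannot be an open subset of the $7$-dimensional affine cone $\widetilde{\Gr}(2,5)$; and the claim that the ``analogue of Proposition~\ref{prop:split}'' and ``suitably adapted'' hypotheses of Proposition~\ref{prop:eq-version} hold for this non-universal torsor is precisely the content that would need proof, since both statements are formulated for torsors whose class in $\Hom_G(\hat{T},\Pic(X))$ is the identity. The correct repair is the paper's own argument: verify the stable permutation property directly, as above, and then apply Proposition~\ref{prop:split} together with Proposition~\ref{prop:eq-version} (or Corollary~\ref{coro:linearbytorsor}) to the honest universal torsor.
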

\begin{proof}
It suffices to show that the action of $\fA_5$ on $\overline{\cM}_{0,5}$ is stably linear. We have seen already that the action on 
the Grassmannian $\Gr(2,5)$ is stably linear. We are using that the N\'eron-Severi torus acts on the cone over $\Gr(2,5)$ with quotient $\overline{\cM}_{0,5}$.
Proposition~\ref{prop:split}
gives the desired result once we check that $\NS(\overline{\cM}_{0,5})$
is stably permutation. We may write
$$M:=\NS(\overline{\cM}_{0,5})=\bZ L + \bZ E_1 + \bZ E_2 + \bZ E_3 + \bZ E_4 $$
so that the $\fS_4$-action is clear. 
The transposition $(45)$ may be realized
by the Cremona map acting by:
\begin{align*}
L & \mapsto 2L-E_1-E_2-E_3\\
E_1 & \mapsto L-E_2-E_3 \\
E_2 & \mapsto L-E_1-E_3 \\
E_3 & \mapsto L-E_1-E_2 \\
E_4 &\mapsto E_4 
\end{align*}
Introducing the auxiliary $\bQ$-basis
\begin{align*}
L_5&=L,\\
L_4&=2L-E_1-E_2-E_3,\\
L_3&=2L-E_1-E_2-E_4,\\
L_2&=2L-E_1-E_3-E_4,\\
L_1&=2L-E_2-E_3-E_4,
\end{align*}
we see immediately that this {\em submodule} 
$\left<L_1,L_2,L_3,L_4,L_5\right>$ is a permutation module.

Consider the direct sum $M\oplus (\bZ F_1 \oplus \bZ F_2)$ where the 
action on the second factor is trivial. This decomposes over $\bZ$
into summands
$$
\left<L_1\!-\!F_1\!-\!F_2,L_2\!-\!F_1\!-\!F_2, L_3\!-\!F_1\!-\!F_2, L_4\!-\!F_1\!-\!F_2,L_5\!-\!F_1\!-\!F_2\right>
$$
and
$$\left<3L\!-\!E_1\!-\!E_2\!-\!E_3\!-\!E_4\!-\!F_1\!-\!2F_2, 3L\!-\!E_1\!-\!E_2\!-\!E_3\!-\!E_4\!-\!2F_1\!-\!F_2 \right>.
$$
The first is a permutation module and the second is trivial.   
\end{proof}

\subsection{Segre cubic threefold}
\label{sect:mo6}

There are two nonconjugate embeddings of $\fA_5$ into $\fS_6$, differing by the nontrivial
outer automorphism of $\fS_6$ \cite[\S 1]{HMSV}. Thus we obtain two actions of 
$G:=\fA_5$ on the Segre cubic threefold $X_3$, hence on $\overline{\cM}_{0,6}$. 
It is known that one of the actions (the nonstandard one) is $G$-equivariant to a linear action on $\bP^3$ \cite[Ex. 1.3.4]{CS}, and that the other is {\em birationally superrigid}, in particular, not linearizable \cite[Theorem 4.8]{avilov-cubic}. 

Regarding $\NS(\overline{\cM_{0,6}})$ as a $G$-module for the nonstandard action, we see that it is stably a permutation module -- since this action is linearizable.  
However, for any finite group $G$ and automorphism $a:G\rightarrow G$, precomposing 
by $a$ yields an action on $G$-modules; this respects permutation and stably permutation modules.  
It follows that the ``standard'' action on $\NS(\overline{\cM_{0,6}})$ is also a stably permutation 
module. 

Consider the class group $\operatorname{Cl}(X_3)$ and  $\NS(\overline{\cM}_{0,6})$ as $\fS_6$-modules. These differ by a permutation module, namely, partitions of $\{1,2,3,4,5,6\}$ into unordered pairs of subsets of size three. Recall that  
$X_3$ is a quotient of $\Gr(2,6)$ by the maximal torus $T\subset \GL_6$. 
The torus acting on the cone over $\Gr(2,6)$ is not the N\'eron-Severi torus for $\overline{\cM_{0,6}}$; it is the N\'eron-Severi torus for small resolutions of $X_3$ -- or even for $X_3$ itself if we
allow Weil divisors on $X_3$. 
The {\em standard} action of $\fS_6$, and thus also of $\fA_5$, on $\Gr(2,6)$ is stably linearizable by Proposition~\ref{prop:st-grass}.
We conclude:

\begin{prop}
\label{prop:segre}
The standard and the nonstandard actions of $\fA_5$ on the Segre cubic threefold are not birational but stably birational. 
\end{prop}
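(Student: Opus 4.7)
The plan is to treat the non-birationality and stable birationality separately and to rely throughout on the torsor formalism developed above. For the non-birationality, I would appeal to the existing literature: the nonstandard $\fA_5$-action on $X_3$ is $\fA_5$-equivariantly birational to a linear action on $\bP^3$ by \cite[Ex.~1.3.4]{CS}, whereas the standard action is birationally superrigid by \cite[Theorem~4.8]{avilov-cubic}; in particular, it is not $\fA_5$-birational to any linear $\bP^3$, so the two actions cannot be $\fA_5$-birational to each other.

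For the stably birational assertion, the strategy is to show that the standard $\fA_5$-action on $X_3$ is stably linearizable; combined with the linearizability of the nonstandard action, this delivers the claim through a common linear model. To this end, I would apply Corollary~\ref{coro:linearbytorsor} to a smooth projective $\fA_5$-equivariant model $\widetilde X_3$ of $X_3$ whose N\'eron--Severi torus may be identified, via the presentation $X_3=\Gr(2,6)\GIT T$ with $T\subset\GL_6$ the maximal torus, with $T$ itself. A $\fA_5$-equivariant universal torsor $\cP\to\widetilde X_3$ is then cut out of the affine cone over the Pl\"ucker embedding of $\Gr(2,6)$ and inherits the standard $\fA_5$-action from the permutation action of $\fS_6$ on columns. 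Proposition~\ref{prop:st-grass} gives that this $\fA_5$-action on $\Gr(2,6)$ is stably linearizable, and that property is preserved on passing to the affine cone, whose additional $\bG_m$-direction carries trivial $\fA_5$-action.

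The remaining input for Corollary~\ref{coro:linearbytorsor} is that $\NS(\widetilde X_3)$ be stably a permutation $\fA_5$-module under the standard action. Here I would invoke the twist observation already highlighted in the excerpt: the two $\fA_5$-module structures on $\NS(\widetilde X_3)$ arise by restricting a single $\fS_6$-module along the two nonconjugate embeddings $\iota_1,\iota_2\colon\fA_5\hookrightarrow\fS_6$, which differ by precomposition with an outer automorphism $a\colon\fA_5\to\fA_5$ induced from the outer automorphism of $\fS_6$. Since the nonstandard action is linearizable, its $\NS$-module is stably permutation; as this property is preserved under precomposition by $a$, the standard $\NS$-module is stably permutation as well, and Corollary~\ref{coro:linearbytorsor} concludes that the standard $\fA_5$-action is stably linearizable.

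The main obstacle I anticipate lies in pinning down the smooth model $\widetilde X_3$: the Segre cubic has ten nodes, and one must identify an $\fA_5$-equivariant small resolution whose Picard lattice matches the character lattice of $T$. A viable alternative, suggested by the parenthetical in the excerpt about allowing Weil divisors on $X_3$, is to work directly with $\operatorname{Cl}(X_3)$, in which case the discrepancy with $\NS(\overline{\cM}_{0,6})$ is controlled by the permutation $\fS_6$-module of unordered tripartitions of $\{1,\dots,6\}$, which is manifestly harmless for the stably-permutation check. The matching between the $\fA_5$-equivariant torsor structure inherited from $\Gr(2,6)$ and the torsor structure predicted by the N\'eron--Severi torus should then be routine once the model is fixed.
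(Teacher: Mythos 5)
Your proposal follows the paper's argument essentially verbatim: the same citations for non-birationality (linearizability of the nonstandard action via \cite[Ex.~1.3.4]{CS}, superrigidity of the standard one via \cite[Theorem~4.8]{avilov-cubic}), the same outer-automorphism twist to establish that the standard $\NS$-module is stably permutation, and the same realization of the universal torsor inside the affine cone over $\Gr(2,6)$ combined with Proposition~\ref{prop:st-grass} and the torsor formalism to get stable linearizability of the standard action. Even your fallback of working with $\operatorname{Cl}(X_3)$ (equivalently, with small resolutions), with the discrepancy from $\NS(\overline{\cM}_{0,6})$ being the permutation module of unordered tripartitions, is exactly how the paper handles the singularities of the Segre cubic.
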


\begin{rema}
Florence and Reichstein \cite{FlorenceReichstein} consider, over nonclosed fields, the rationality 
of twists of $\overline{\cM}_{0,n}$ arising from automorphisms associated with permutations of the marked points.
These are always rational for odd $n$ but may be irrational when $n$ is even.  
\end{rema}

\bibliographystyle{alpha}
\bibliography{gtorsor}

\end{document}